\newtheorem{proposition}{Proposition}[section]
\newtheorem{corollary}[proposition]{Corollary}
\newtheorem{theorem}[proposition]{Theorem}
\newtheorem{lemma}[proposition]{Lemma}
\theoremstyle{definition}
\newtheorem{remark}[proposition]{Remark}
\newtheorem{example}[proposition]{Example}
\newcommand{\insN}{\mathbb{N}}
\newcommand{\Spec}{\mathrm{Spec}}
\newcommand{\Max}{\mathrm{Max}}
\newcommand{\Crit}{\mathrm{Crit}}
\DeclareMathOperator{\suc}{Sc}
\DeclareMathOperator{\li}{Lim}
\newcommand{\XX}{\mathbf{X}}
\title{A realization theorem for almost Dedekind domains}
\author{Balint Rago}
\author{Dario Spirito}
\address{University of Graz, NAWI Graz, Department of Mathematics and Scientific Computing, Heinrichstraße 36,
8010 Graz, Austria}
\thanks{This work was supported by the Austrian Science Fund FWF, Project Number W1230}
\email{balint.rago@uni-graz.at}
\address{Dipartimento di Scienze Matematiche, Fisiche e Informatiche, Universit\`a di Udine, Udine, Italy}
\email{dario.spirito@uniud.it}
\date{\today}
\keywords{Almost Dedekind domains; SP-domains; SP-rank}
\subjclass[2020]{13F05; 13A15}
\begin{document}

\maketitle

\begin{abstract}
   An integral domain $D$ is called an SP-domain if every ideal is a product of radical ideals. Such domains are always almost Dedekind domains, but not every almost Dedekind domain is an SP-domain. The SP-rank of $D$ provides a natural measure of the deviation of $D$ from being an SP-domain.  In the present paper we show that every ordinal number $\alpha$ can be realized as the SP-rank of an almost Dedekind domain.
\end{abstract}

\section{Introduction}
Dedekind domains are characterized by the property that any ideal has prime factorization, that is, it can be factorized as a (finite) product of prime ideals. Weakening this condition, one can ask about domains where every ideal has radical factorization, i.e., where every ideal is a (finite) product of radical ideals: such domains are called \emph{SP-domains} and, while not every SP-domain is Dedekind, every such domain is an \emph{almost Dedekind domain}, i.e., it is locally a discrete valuation ring (or, equivalently, it is locally a Dedekind domain). SP-domains are often nicer than general almost Dedekind domains: for example, it is possible to express the group of invertible ideals of an SP-domain as a group of continuous functions from its maximal space \cite[Theorem 5.1]{HK-Olb-Re}.

SP-domains can be characterized in several different ways among the almost Dedekind domains (see \cite[Theorem 2.1]{olberding-factoring-SP} or \cite[Theorem 3.1.2]{fontana-factoring}). To understand the properties of almost Dedekind domains that are not SP-domains, the paper \cite{SP-scattered} introduced a chain $\{\Delta_\alpha\}_\alpha$ of subsets of the maximal space $\Max(D)$ (which also defines a chain $\{T_\alpha\}_\alpha$ of overrings of $D$) that allows to study such domains in more depth; see Section \ref{sect:prelim:almded} for the definition. The smallest ordinal number $\alpha$ such that $\Delta_\alpha=\emptyset$ (or, equivalently, such that $T_\alpha=K$) measures the deviation of the domain from having radical factorization, and is called the \emph{SP-rank} of $D$. The chain $\{\Delta_\alpha\}_\alpha$ and the SP-rank of $D$ are formally analogous, respectively, to the derived series and to the Cantor-Bendixson rank of a topological space.

SP-domains are precisely the almost Dedekind domains that have SP-rank $1$. Examples 3.4.1 and 3.4.2 of \cite{fontana-factoring} are examples of almost Dedekind domains that are not SP-domains; in both cases, the first set $\Delta_1$ of the chain $\{\Delta_\alpha\}$ is a singleton, and thus the domains have SP-rank $2$. In this paper, we use these examples as a starting point of a construction that shows that every ordinal number $\alpha$ is the SP-rank of an almost Dedekind domain.

Our approach will be as follows. We define a graph $\mathcal{T}$ with vertex set $\mathbf{X}$, where the vertices are indexed by a pair $(\beta,t)$ where $\beta<\alpha$ is an ordinal number and $t$ is a sequence of ordinal numbers with certain properties (depending on $\beta$); we call $\beta$ the \emph{height} of the vertex. Next, we interpret the vertices of height $0$ as independent indeterminates over a field $F$, while the vertices of higher height are connected with them through infinite products, and we construct the domain $F[\mathbf{X}]$. We show that the divisibility properties of elements in this domain are closely linked to combinatorial properties of $\mathcal{T}$, more specifically to the structure of paths from vertices of height $\beta$ to vertices of height $0$; in particular, any two monomials (where a monomial is a finite product of elements of $\mathbf{X}$) have a greatest common divisor (Proposition \ref{prop:gcd monomial}). This allows us to define a multiplicative subset $S\subseteq F[\mathbf{X}]$ such that the localization $D:=S^{-1}F[\mathbf{X}]$ is a domain where every element is associated to a monomial: we explicitly determine the set of maximal ideals of $D$, and we show that a maximal ideal $M$ is in $\Delta_\beta$ if and only if it is generated by certain elements of height at least $\beta$: in particular, the chain $\{\Delta_\beta\}_\beta$ stabilizes precisely at $\alpha$, proving that $D$ has SP-rank $\alpha$.

An alternate construction is given in our concurrent paper \cite{almded-tree}, where we construct almost Dedekind domains of prescribed SP-rank from extensions of a discrete valuation ring; however, the method in \cite{almded-tree} only works for \emph{countable} ordinal numbers, while the one in the present paper gives almost Dedekind domains of arbitrary SP-rank.

\section{Preliminaries}

\subsection{Ordinals} Let $\alpha$ be an ordinal number. Then, $\alpha$ is a successor ordinal if there is a $\beta$ such that $\alpha=\beta+1$, while it is a limit ordinal otherwise. We denote by $\omega$ the first limit ordinal, i.e., the limit of the finite ordinals.

Given an ordinal $\alpha$, we denote by $\suc(\alpha)$ the set of all successor ordinals $\beta<\alpha$, and by $\li(\alpha)$ the set of all limit ordinals $\beta<\alpha$.

If $\xi,\alpha$ are ordinal numbers, we denote by $\xi^\alpha$ the set of all sequences $t=(t_0,t_1,t_2,\ldots,t_\omega,t_{\omega+1},\ldots)$ indexed by the ordinal numbers $\beta<\alpha$ such that $t_\beta<\xi$ for all $\beta$. We set $\alpha\setminus\{0\}$ to be the set of all ordinal numbers $0<\beta<\alpha$, and so $\xi^{\alpha\setminus\{0\}}$ is defined as above, but the elements of the sequence are indexed by the $\beta$ such that $0<\beta<\alpha$. In particular, if $\alpha=n$ is a finite ordinal, then the elements of $\xi^n$ are sequences $(t_0,\ldots,t_{n-1})$ of length $n$, while $\xi^{n\setminus\{0\}}$ contains sequences $(t_1,\ldots,t_{n-1})$ of length $n-1$.

If $0<\beta<\alpha$, we denote by $e_\beta\in\xi^{\alpha\setminus\{0\}}$ the element such that
\begin{equation*}
(e_{\beta})_\gamma=\begin{cases}
    1 & \text{if } \beta=\gamma \\
    0 & \text{otherwise}.
\end{cases}
\end{equation*}

\subsection{Graphs}
A \emph{directed graph} $\mathcal{T}$ is a pair $(V,E)$, where $V$ is a set (whose elements are called \emph{vertices}) and $E$ is a subset of $V\times V$ (whose elements $(v_1,v_2)$ are called \emph{edges}). In general, we use $\mathcal{T}$ to denote both the graph and the set of its vertices. A \emph{path} in $\mathcal{T}$ is a well-ordered sequence $(v_i)_{i\in I}$ of vertices such that $(v_i,v_{i+1})$ is an edge for all $i\in I$. We call $v_1$ the \emph{starting point} or the \emph{start} of the path.

\subsection{Almost Dedekind domains}\label{sect:prelim:almded}
An integral domain $D$ is said to be an \emph{almost Dedekind domain} if $D_M$ is a discrete valuation ring for every $M\in\Max(D)$; an almost Dedekind domain is always a one-dimensional Pr\"ufer domain.

A maximal ideal $M$ of $D$ is said to be \emph{critical} if it does not contain any finitely generated radical ideal, or equivalently, if every finitely generated ideal $I\subseteq M$ is contained in the square of a maximal ideal. We denote by $\Crit(D)$ the set of critical maximal ideals of $D$; this set is empty if and only if $D$ is an SP-domain (see \cite[Theorem 2.1]{olberding-factoring-SP} or \cite[Theorem 3.1.2]{fontana-factoring}).

We define recursively a chain $\{\Crit^\alpha(D)\}_\alpha$ of subsets of $\Max(D)$ and a chain $\{T_\alpha\}_\alpha$ of overrings of $D$ in the following way, where $\alpha$ is an ordinal number:
\begin{itemize}
\item $T_0:=D$, $\Crit^0(D):=\Max(D)$;
\item if $\alpha=\beta+1$ is a successor ordinal, then
\begin{equation*}
\Crit^\alpha(D):=\{P\in\Max(D)\mid PT_\beta\in\Crit(T_\beta)\};
\end{equation*}
\item if $\alpha$ is a limit ordinal, then
\begin{equation*}
\Crit^\alpha(D):=\bigcap_{\beta<\alpha}\Crit^\beta(D);
\end{equation*}
\item $\displaystyle{T_\alpha:=\bigcap\{D_M\mid M\in\Crit^\alpha(D)\}}$.
\end{itemize}
The set $\{\Crit^\alpha(D)\}_\alpha$ is a descending chain of subsets of $\Max(D)$, while $\{T_\alpha\}_\alpha$ is an ascending chain; we call the latter the \emph{SP-derived sequence} of $D$. Moreover, the maximal ideals of $T_\alpha$ are exactly the extensions of the maximal ideals in $\Crit^\alpha(D)$ \cite[Lemma 5.3]{SP-scattered}.

The \emph{SP-rank} of $D$ is the smallest ordinal number $\alpha$ such that $\Crit^\alpha(D)=\emptyset$; this rank always exists \cite[Theorem 5.1]{boundness}, and it is equal to the smallest ordinal number $\alpha$ such that $T_\alpha=K$, where $K$ is the quotient field of $D$.

\section{The graph}
From now on, let $\alpha$ be a fixed ordinal, and let $\xi:=\max\{\omega,\alpha\}$: we consider the set $\xi^{\alpha\setminus\{0\}}$. For every ordinal number $\beta<\alpha$, we define a function
\begin{equation*}
\tau_\beta:\xi^{\alpha\setminus \{0\}}\to\xi^{\alpha\setminus \{0\}},
\end{equation*}
where 
\begin{equation*}
(\tau_\beta(t))_\gamma:=\begin{cases}
t_\gamma & \text{if } \gamma\geq\beta+1 \\
0 & \text{otherwise.}
\end{cases}.
\end{equation*}
Hence $\tau_\beta$ preserves every component that is indexed by an ordinal $\gamma>\beta$ and maps every other component to zero.

For each $\beta\in\suc(\alpha)\cup \{0\}$, we define $U_\beta$ to be the set of all $t\in\xi_\alpha^{\alpha\setminus \{0\}}$ with only finitely many nonzero components, such that $t_\gamma\in \mathbb{N}_0$ for every $\gamma\in\suc(\alpha)$ and such that $t_{\delta}=0$ for all $\delta<\beta$.

If $\beta\in \li(\alpha)$, then we define $U_\beta$ in the exact same way, with the additional requirement that $t_\beta< \beta$ for all $t\in U_\beta$. 

\begin{example}
We explicitly write down the elements of $U_\beta$ when $\alpha=\omega+1$ and $\beta<\alpha$. We have $\suc(\alpha)=\{1,2,\ldots,n,\ldots,\}=\insN$ and $\li(\alpha)=\{\omega\}$.

Every element of $U_\beta$ is a sequence $(t_1,t_2,\ldots,t_n,\ldots,t_\omega)$ such that:
\begin{itemize}
\item if $\beta=n$ is a successor ordinal, then $t=(0,\ldots,0,t_n,\ldots,t_{\omega})$, where $t_i$ is a natural number for $i\in\insN$, $i\geq n$, while $t_\omega\in\insN_0\cup\{\omega\}$; moreover, only finitely many $t_i$ are nonzero;
\item if $\beta=\omega$, then $t=(0,0,\ldots,0,\ldots,t_\omega)$ with $t_\omega<\omega$ (i.e., $t_\omega\in\insN_0$).
\end{itemize}
\end{example}

\bigskip

We now construct a directed graph $\mathcal{T}$ in the following way. 
\begin{itemize}
    \item $\mathcal{T}$ has $\{X_{\beta,t}:\beta<\alpha,t\in U_\beta\}$ as its set of vertices.
    
    \item Let $\beta\in\suc(\alpha)$, say $\beta=\gamma+1$, and let $t\in U_\beta$. Then $t+ke_\beta\in U_\gamma$ and we set $(X_{\beta,t},X_{\gamma,t+k e_\beta})$ to be an edge for every $k\in \mathbb{N}$.
    
    \item Let $\beta\in \li(\alpha)$, $t\in U_\beta$ and let $\gamma\in\suc(\beta)$ be a successor ordinal such that $t_\beta<\gamma$. Then $\tau_\beta(t)\in U_\gamma$ and we set $(X_{\beta,t},X_{\gamma,\tau_\beta(t)})$ to be an edge.
    
    \item These are all the edges in $\mathcal{T}$.
\end{itemize} 

For a fixed ordinal $\beta<\alpha$, we denote the set of all $\{X_{\beta,t}:t\in U_\beta\}$ by $\mathbf{X}_\beta$ and we set \[\mathbf{X}:=\bigcup_{\beta<\alpha}\mathbf{X}_\beta.\] Moreover, we say that $X\in \mathbf{X}$ is of \textit{height} $\beta$ if $X\in \mathbf{X}_\beta$.

\begin{lemma}
Preserve the notation above. Then, the following hold:
\begin{enumerate}[(a)]
    \item every path in $\mathcal{T}$ is finite;
    \item if $X\in\mathbf{X}\setminus\mathbf{X}_0$, there is an $X'\in\mathbf{X}_0$ such that there is a path from $X$ to $X'$.
\end{enumerate}
\end{lemma}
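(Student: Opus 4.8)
The plan is to prove both statements by analyzing the edges of $\mathcal{T}$ and showing that every edge strictly decreases a suitable ordinal-valued complexity measure attached to the vertices, so that no infinite path can exist, and that moreover from any vertex of positive height one can always step down.

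\textbf{Part (a).} First I would observe that every edge goes from a vertex $X_{\beta,t}$ to a vertex $X_{\gamma,s}$ with $\gamma<\beta$: indeed, in the successor case $\beta=\gamma+1$ the target has height $\gamma<\beta$, and in the limit case the target has height a successor ordinal $\gamma$ with $t_\beta<\gamma<\beta$ (since $t_\beta<\beta$ by definition of $U_\beta$ for limit $\beta$, and we may pick such a $\gamma$). Hence along any path the sequence of heights $\beta_0>\beta_1>\beta_2>\cdots$ is a strictly decreasing sequence of ordinals, which must be finite by well-foundedness of the ordinals. This already forces every path to be finite, and in fact gives a bound on its length in terms of the starting height.

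\textbf{Part (b).} Here I would argue that a vertex $X\in\mathbf{X}\setminus\mathbf{X}_0$ always has at least one outgoing edge, so that starting from $X$ we can keep following edges; by part (a) this process terminates, and it can only terminate at a vertex of height $0$, producing the desired path from $X$ to some $X'\in\mathbf{X}_0$. To see that $X=X_{\beta,t}$ with $\beta>0$ has an outgoing edge: if $\beta=\gamma+1$ is a successor, then $t+e_\beta$ still has finitely many nonzero components with the correct integrality and vanishing-below-$\beta$ conditions, hence lies in $U_\gamma$ (one checks $t_\delta=0$ for $\delta<\gamma$ since $\gamma<\beta$... actually for $\delta<\gamma\le\beta$ we already had $t_\delta=0$), so $(X_{\beta,t},X_{\gamma,t+e_\beta})$ is an edge. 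If $\beta$ is a limit ordinal, then by definition $t_\beta<\beta$, so $t_\beta+1<\beta$ is below a successor ordinal $\gamma$ with $t_\beta<\gamma<\beta$ (e.g.\ $\gamma=t_\beta+1$ if that is a successor, or the next successor above it), and then $\tau_\beta(t)\in U_\gamma$: it has finitely many nonzero components, the right integrality, and $\tau_\beta(t)_\delta=0$ for $\delta\le\beta$, in particular for $\delta<\gamma$. Thus $(X_{\beta,t},X_{\gamma,\tau_\beta(t)})$ is an edge, completing the argument.

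The only mildly delicate point, and the one I expect to require the most care, is the bookkeeping in the limit case of part (b): one must confirm that there genuinely exists a successor ordinal $\gamma$ with $t_\beta<\gamma<\beta$ — which holds because $\beta$ is a limit ordinal, so it is strictly larger than $t_\beta+1$ and there is a successor ordinal strictly between $t_\beta$ and $\beta$ — and that $\tau_\beta(t)$ satisfies all the membership conditions for $U_\gamma$, including $(\tau_\beta(t))_\gamma<\gamma$ in case $\gamma$ happens to be a limit ordinal (it is not, since we chose $\gamma$ a successor, but in any case $(\tau_\beta(t))_\gamma=t_\gamma$ and the original constraints on $t$ transfer). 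Everything else is a routine verification that the defining conditions of the sets $U_\beta$ are preserved under the relevant operations $t\mapsto t+ke_\beta$ and $t\mapsto\tau_\beta(t)$.
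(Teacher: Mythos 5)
Your proposal is correct and follows essentially the same route as the paper: part (a) is the observation that heights strictly decrease along edges, so a path yields a strictly descending (hence finite) sequence of ordinals, and part (b) is descent to height $0$ via the existence of an outgoing edge from every vertex of positive height (the paper phrases this as induction on the height, you as iterated stepping-down terminated by (a), which is the same argument). The extra verifications you supply — that $t+ke_\beta\in U_\gamma$ and $\tau_\beta(t)\in U_\gamma$, and that a successor ordinal $\gamma$ with $t_\beta<\gamma<\beta$ exists in the limit case — are correct and are asserted in the paper's construction of the graph itself.
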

\begin{proof}
If $(X_{\beta_i,s_i})_{i\in I}$ is a path, then $(\beta_i)_{i\in I}$ is a strictly descending chain of ordinal numbers. By well-ordering, such a chain must be finite.

Let $X=X_{\beta,t}$: we proceed by induction on $\beta$. By definition, there is an edge $(X_{\beta,t},X_{\beta',t'})$ for some $\beta'<\beta$; by induction, either $\beta'=0$ (and we are done), or there is a path from $X_{\beta',t'}$ to $X'\in\mathbf{X}_0$. Hence there is also a path from $X$ to $X'$, as claimed.
\end{proof}

\begin{lemma}\label{lemma:path-X0s}
Let $\beta$ be an ordinal, $t\in U_\beta,s\in U_0$, and suppose that there is a path from $X_{\beta,t}$ to $X_{0,s}$. Then, the following hold.
\begin{enumerate}[(a)]
\item\label{lemma:path-X0s:taubeta} $\tau_\beta(t)=\tau_\beta(s)$.
\item\label{lemma:path-X0s:sbeta} If $\beta$ is a successor ordinal, then $s_\beta>t_\beta$.
\end{enumerate}
\end{lemma}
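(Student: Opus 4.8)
The plan is to fix a path $X_{\beta,t}=X_{\beta_0,t^{(0)}}\to X_{\beta_1,t^{(1)}}\to\cdots\to X_{\beta_n,t^{(n)}}=X_{0,s}$ provided by the hypothesis and to track how the label sequences $t^{(i)}$ change along the successive edges. By the previous lemma this path is finite, and by construction the heights strictly descend, $\beta=\beta_0>\beta_1>\cdots>\beta_n=0$; in particular $\beta_i\leq\beta$ for every $i$, with equality only when $i=0$. Each edge has one of two shapes (call them \emph{successor edges} and \emph{limit edges}): if $\beta_i$ is a successor ordinal, say $\beta_i=\gamma+1$, the edge is $(X_{\beta_i,t^{(i)}},X_{\gamma,\,t^{(i)}+ke_{\beta_i}})$ for some $k\in\mathbb{N}$, so that $t^{(i+1)}$ differs from $t^{(i)}$ only in coordinate $\beta_i$, where the value strictly increases; if $\beta_i$ is a limit ordinal, the edge is $(X_{\beta_i,t^{(i)}},X_{\delta,\tau_{\beta_i}(t^{(i)})})$ for some successor ordinal $\delta<\beta_i$, so that $t^{(i+1)}=\tau_{\beta_i}(t^{(i)})$ agrees with $t^{(i)}$ in every coordinate of index $>\beta_i$ and vanishes in every coordinate of index $\leq\beta_i$.

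For part (a), the key observation to make is that every edge of the path, of either kind, leaves unchanged every coordinate of index $\geq\beta+1$: a successor edge modifies only coordinate $\beta_i$, and $\beta_i\leq\beta<\beta+1$, while a limit edge modifies only coordinates of index $\leq\beta_i\leq\beta<\beta+1$. Since the sequence $\tau_\beta(u)$ is determined solely by the coordinates of $u$ of index $\geq\beta+1$, I can then conclude $\tau_\beta(t^{(0)})=\tau_\beta(t^{(1)})=\cdots=\tau_\beta(t^{(n)})$, that is, $\tau_\beta(t)=\tau_\beta(s)$.

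For part (b), I would start from the assumption that $\beta$ is a successor ordinal. Then the starting vertex $X_{\beta_0,t^{(0)}}=X_{\beta,t}$ has successor height, and no limit edge issues from a vertex of successor height, so the first edge of the path must be a successor edge; hence $(t^{(1)})_\beta=t_\beta+k\geq t_\beta+1$ for some $k\in\mathbb{N}$. At every later stage $i\geq1$ one has $\beta_i<\beta$, so the edge at stage $i$ does not touch coordinate $\beta$: a successor edge modifies only coordinate $\beta_i\neq\beta$, and a limit edge modifies only coordinates of index $\leq\beta_i<\beta$. Therefore $s_\beta=(t^{(n)})_\beta=(t^{(1)})_\beta\geq t_\beta+1>t_\beta$. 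The whole thing is bookkeeping driven by the strict descent of the heights; the one place that demands a little care is the first edge — the stage $i=0$, where $\beta_i$ equals $\beta$ itself — and, for part (b), the point that when $\beta$ is a successor this first edge is forced to be of successor type. After that, the behaviour at every remaining stage is uniform.
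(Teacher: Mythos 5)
Your proof is correct and follows essentially the same route as the paper: track the label sequences along the (finite, height-descending) path and observe that each edge only alters coordinates of index at most the height of its source vertex, so coordinates of index $\geq\beta+1$ are preserved throughout (giving (a)), while for (b) the first edge is forced to be of successor type and strictly increases coordinate $\beta$, which no later edge can touch. The paper phrases (b) by applying part (a) to the tail of the path starting at $X_{\beta_1,t^{(1)}}$, but this is the same bookkeeping you carry out directly.
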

\begin{proof}
Let $(X_{\beta,t}=X_{\beta_0,t_0},X_{\beta_1,t_1},\ldots,X_{\beta_n,t_n}=X_{0,s})$ be a path.

\ref{lemma:path-X0s:taubeta} We prove by induction that $\tau_\beta(t)=\tau_\beta(t_i)$.The fact that $\tau_\beta(t)=\tau_\beta(t_0)$ is obvious; suppose that $\tau_\beta(t)=\tau_\beta(t_{i-1})$. If $\beta_{i-1}$ is a successor ordinal, then $t_i=t_{i-1}+ke_{\beta_{i-1}}$, and in particular, the elements of $t_i$ and $t_{i-1}$ of index at least $\beta_{i-1}+1<\beta$ are the same; similarly, if $\beta_{i-1}$ is a limit ordinal, then $t_i=\tau_{\beta_{i-1}}(t_{i-1})$, and again all elements of index at least $\beta_{i-1}+1<\beta$ are the same. In both cases, $\tau_\beta(t_i)=\tau_\beta(t_{i-1})=\tau_\beta(t)$, and thus $\tau_\beta(s)=\tau_\beta(t_n)=\tau_\beta(t)$.

\ref{lemma:path-X0s:sbeta} Suppose that $\beta=\gamma+1$ is a successor ordinal. Then $n\ge 1$, $t_1=t+ke_\beta$ for some $k>0$ and thus $(t_1)_\beta> t_\beta$. By (1), we obtain that $\tau_{\beta_1}(t_1)=\tau_{\beta_1}(s)$ and since $\beta_1<\beta$, also that $(t_1)_\beta=s_\beta$. Hence $s_\beta>t_\beta$ and the claim is proved.
\end{proof}

\begin{lemma}\label{lemma:tree}
  Let $X,Y\in\mathbf{X}$. Then there is at most one path from $X$ to $Y$.
\end{lemma}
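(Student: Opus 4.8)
The plan is to prove the statement by transfinite induction on the height $\beta$ of the starting vertex $X$. The one ingredient I would record first is the following coordinate‑stabilization property, in the spirit of Lemma~\ref{lemma:path-X0s}: \emph{if there is a path from $X_{\rho,s}$ to $X_{\rho',s'}$, then $s_\gamma=s'_\gamma$ for every index $\gamma>\rho$.} This follows by a straightforward induction along the path: each edge replaces the label $u$ of its tail either by $u+ke_\sigma$ (when the tail's height $\sigma$ is a successor) or by $\tau_\sigma(u)$ (when $\sigma$ is a limit), and in both cases only components of index $\le\sigma$ are altered; since heights strictly decrease along a path, the tail's height never exceeds $\rho$, so components of index $>\rho$ are never touched.

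For the induction, if $\beta=0$ then $X$ has no outgoing edge (the edge rules produce edges only out of vertices of successor or limit height), so the only path starting at $X$ is the trivial path $(X)$, and the claim is immediate. For $\beta>0$, assume the statement for all starting vertices of height $<\beta$, and let $P=(X=X_{\beta,t},P_1,\dots,P_n)$ and $Q=(X,Q_1,\dots,Q_m)$ be two paths from $X$ to $Y=X_{\beta',t'}$. If $n=0$ then $X=Y$, and since a positive‑length path ends at a strictly smaller height we also have $m=0$, so $P=Q$. Assume then $n,m\ge1$; it suffices to prove $P_1=Q_1$, for then $(P_1,\dots,P_n)$ and $(Q_1,\dots,Q_m)$ are paths from $P_1$ to $Y$ starting at a vertex of height $<\beta$, and the inductive hypothesis gives $P=Q$. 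If $\beta=\gamma+1$ is a successor, the edge rule forces $P_1=X_{\gamma,t+ke_\beta}$ and $Q_1=X_{\gamma,t+\ell e_\beta}$ for some $k,\ell\ge1$; applying the stabilization property to the sub‑path from $P_1$ (which has height $\gamma$, and $\beta=\gamma+1>\gamma$) to $Y$ gives $t'_\beta=(t+ke_\beta)_\beta=t_\beta+k$, and likewise $t'_\beta=t_\beta+\ell$, so $k=\ell$ and $P_1=Q_1$.

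The case where $\beta$ is a limit ordinal is the delicate one, since a limit‑height vertex has edges to infinitely many heights. Here $P_1=X_{\gamma_1,\tau_\beta(t)}$ and $Q_1=X_{\gamma_2,\tau_\beta(t)}$ for successor ordinals $\gamma_1,\gamma_2<\beta$; suppose for contradiction that $\gamma_1\ne\gamma_2$, say $\gamma_1<\gamma_2$. Applying the stabilization property to the sub‑path from $P_1$ (of height $\gamma_1<\gamma_2$) to $Y$, and using $(\tau_\beta(t))_{\gamma_2}=0$ since $\gamma_2<\beta$, we get $t'_{\gamma_2}=0$. On the other hand $m\ge2$, because $m=1$ would make $Y=Q_1$ of height $\gamma_2$, whereas $Y$ lies on the tail of $P$ after $P_1$ and hence has height $\le\gamma_1<\gamma_2$. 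Writing $\gamma_2=\delta_2+1$, the edge leaving $Q_1$ forces $Q_2=X_{\delta_2,\tau_\beta(t)+k_2e_{\gamma_2}}$ for some $k_2\ge1$; applying the stabilization property to the sub‑path from $Q_2$ (of height $\delta_2$, with $\gamma_2=\delta_2+1>\delta_2$) to $Y$ gives $t'_{\gamma_2}=(\tau_\beta(t)+k_2e_{\gamma_2})_{\gamma_2}=k_2\ge1$, contradicting $t'_{\gamma_2}=0$. Hence $\gamma_1=\gamma_2$ and $P_1=Q_1$, completing the induction. The main obstacle is precisely this limit case; the point to exploit is that stepping down from $\gamma_2$ immediately raises the $\gamma_2$‑th coordinate to a positive value that is thereafter frozen, whereas the rival path — leaving at the smaller height $\gamma_1$ — keeps that coordinate $0$ forever, and $Y$ cannot satisfy both.
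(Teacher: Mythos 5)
Your proof is correct and follows essentially the same strategy as the paper's: induction on the height of the starting vertex, with the successor case settled by reading off $k$ from the $\beta$-th coordinate of the endpoint and the limit case settled by showing the $\gamma_2$-th coordinate of the endpoint would have to be both $0$ and positive. The only (harmless) organizational difference is that you prove the coordinate-stabilization property for arbitrary endpoints and argue with a general $Y$ directly, whereas the paper first reduces to paths ending in $\mathbf{X}_0$ and then invokes Lemma~\ref{lemma:path-X0s}.
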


\begin{proof}
    Let $\beta<\alpha$, $t\in U_\beta$ and $s\in U_0$. Since every path in $\mathcal{T}$ can be extended to a path to some element of $\mathbf{X}_0$, it is enough to show that there is at most one path from $X_{\beta,t}$ to $X_{0,s}$.
This is clearly true if $\beta=0$. We proceed by induction. \\

Let $\beta$ be a successor ordinal, say $\beta=\gamma+1$, and let $k\in\mathbb{N}$ such that $(X_{\beta,t}, X_{\gamma,t+ke_\beta})$ is an edge and such that there is a path from $X_{\gamma,t+ke_\beta}$ to $X_{0,s}$. By Lemma \ref{lemma:path-X0s}\ref{lemma:path-X0s:taubeta}, we have that $\tau_\gamma(t+ke_\beta)=\tau_\gamma(s)$ and thus $t_{\beta}+k=s_{\beta}$. Hence the value of $k$ is uniquely determined by $s$ and since by induction hypothesis, there is a unique path from $X_{\gamma,t+ke_\beta}$ to $X_{0,s}$, we are done. 

Let now $\beta$ be a limit ordinal, $t\in U_\beta,s\in U_0$ and suppose that there are $\gamma_1,\gamma_2\in\suc(\beta)$ satisfying $t_\beta<\gamma_1<\gamma_2$, such that $(X_{\beta,t},X_{\gamma_1,\tau_\beta(t)})$ and $(X_{\beta,t},X_{\gamma_2,\tau_\beta(t)})$ are edges and such that there is a path from $X_{\gamma_1,\tau_\beta(t)}$ to $X_{0,s}$ and a path from $X_{\gamma_2,\tau_\beta(t)}$ to $X_{0,s}$. Since $\gamma_2$ is a successor ordinal, say $\gamma_2=\delta+1$, there is a $k\in \mathbb{N}$ such that there is a path from $X_{\delta,\tau_\beta(t)+ke_{\gamma_2}}$ to $X_{0,s}$. We then have $\tau_{\delta}(\tau_\beta(t)+ke_{\gamma_2})=\tau_{\delta}(s)$ and thus $s_{\gamma_2}=t_{\gamma_2}+k=k$. However, since there is a path from $X_{\gamma_1,\tau_\beta(t)}$ to $X_{0,s}$, we also have that $\tau_{\gamma_1}(\tau_\beta(t))=\tau_{\gamma_1}(s)$ and hence $s_{\gamma_2}=t_{\gamma_2}=0$, which yields a contradiction.

Hence there is at most one $\gamma\in\suc(\beta)$ with $t_\beta<\gamma$ such that there is a path from $X_{\gamma,\tau_\beta(t)}$ to $X_{0,s}$. Then by induction hypothesis, there is a unique path from $X_{\gamma,\tau_\beta(\gamma)}$ to $X_{0,s}$. 
\end{proof}

\section{The domain $F[\mathbf{X}]$}

Let $F$ be a field such that $\mathbf{X}_0$ is a set of independent indeterminates over $F$. For any $X\in\mathbf{X}\setminus\mathbf{X}_0$, we set \[X:=\prod_{(X,Y)\in\mathcal{T}}Y^2.\] Hence if $\beta\in \suc(\alpha)$, where $\beta=\gamma+1$, and $t\in U_\beta$, we have \[X_{\beta,t}=\prod_{i\in\mathbb{N}}X^2_{\gamma,t+ie_\beta}\] and
if $\beta\in\li(\alpha)$ and $t\in U_{\beta}$, we have \[X_{\beta,t}=\prod_{t_\beta<\gamma\in\suc(\beta)}X^2_{\gamma,\tau_\beta(t)}.\] 

\begin{remark}
    Defining $X_{\beta,t}$ in this way to be an infinite product is essentially equivalent to prescribing the following algebraic relations to elements of $\XX$. If $\beta\in\suc(\alpha)$ with $\beta=\gamma+1$ and $t\in U_\beta$, then \[X_{\beta,t}=X_{\gamma,t+e_\beta}^2\cdot X_{\beta,t+e_\beta}.\] If $\beta$ is a limit ordinal, we have \[X_{\beta,t}=X_{t_\beta+1,\tau_\beta(t)}^2\cdot X_{\beta,t+e_\beta}\] and if $\delta$ is a limit ordinal with $t_\beta<\delta<\beta$, then \[X_{\beta,t}=X_{\delta,\tau_\beta(t)+t_\beta e_\delta}\cdot X_{\beta,\tau_\beta(t)+\delta e_\beta}.\] Later on, we will see these identities in more detail. 
\end{remark} 

We will now investigate properties of the integral domain $F[\mathbf{X}]$.

We call an element $Y\in F[\mathbf{X}]$ a \textit{monomial} if \[Y=Y_1\ldots Y_n,\] where $n\in\mathbb{N}_0$ and $Y_1,\ldots, Y_n\in\mathbf{X}$. Note that the algebraic relations between elements in $\mathbf{X}$ are purely multiplicative. Hence $F[\mathbf{X}]$ is an $F$-vector space with the set of all monomials as basis. Moreover, every divisor of a monomial is itself a monomial. Alternatively, one can view $F[\mathbf{X}]$ as the monoid algebra $F[M]$, where $M$ is the multiplicative monoid generated by the elements in $\mathbf{X}$.

\begin{lemma}\label{lemma:product}
Let $\beta$ be an ordinal and let $t\in U_\beta$. Let $R_0:=R_0(X_{\beta,s}):=\{s\in U_0$ such that there is a path from $X_{\beta,t}$ to $X_{0,s}\}$. Then, there are positive integers $n_r$ such that
\begin{equation*}
X_{\beta,t}=\prod_{r\in R_0}X_{0,r}^{n_r}.
\end{equation*}
In particular, $n_r=2^{d(X_{\beta,t},X_{0,s})}$, where $d(X_{\beta,t},X_{0,s})$ is the length of the unique path from $X_{\beta,t}$ to $X_{0,s}$.
\end{lemma}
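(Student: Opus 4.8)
The plan is to prove the formula $X_{\beta,t}=\prod_{r\in R_0}X_{0,r}^{n_r}$ with $n_r=2^{d(X_{\beta,t},X_{0,r})}$ by transfinite induction on $\beta$. The base case $\beta=0$ is immediate: $R_0=\{t\}$, the only path from $X_{0,t}$ to an element of $\mathbf{X}_0$ is the trivial path of length $0$, and $X_{0,t}=X_{0,t}^1=X_{0,t}^{2^0}$. For the inductive step I would split according to whether $\beta$ is a successor or a limit ordinal, unravelling the defining infinite product for $X_{\beta,t}$ one level at a time and then applying the inductive hypothesis to each factor.

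First, suppose $\beta=\gamma+1$ is a successor. By definition $X_{\beta,t}=\prod_{i\in\mathbb{N}}X_{\gamma,t+ie_\beta}^2$, and each $X_{\gamma,t+ie_\beta}$ has height $\gamma<\beta$, so the inductive hypothesis applies to it. The key combinatorial point is that every path from $X_{\beta,t}$ to some $X_{0,s}$ factors uniquely (by Lemma~\ref{lemma:tree}) as the initial edge $(X_{\beta,t},X_{\gamma,t+ie_\beta})$ followed by a path from $X_{\gamma,t+ie_\beta}$ to $X_{0,s}$, for a unique $i$; conversely each such concatenation is a path. Hence $R_0(X_{\beta,t})=\bigsqcup_{i\in\mathbb{N}}R_0(X_{\gamma,t+ie_\beta})$, and this union is disjoint because, by Lemma~\ref{lemma:path-X0s}\ref{lemma:path-X0s:sbeta} applied to $X_{\gamma,t+ie_\beta}$ — or more simply by reading off the $\beta$-th coordinate $s_\beta=t_\beta+i$ via Lemma~\ref{lemma:path-X0s}\ref{lemma:path-X0s:taubeta} — the index $i$ is determined by $s$. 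Substituting the inductive formula for each $X_{\gamma,t+ie_\beta}$, squaring, and taking the product over $i$ gives $X_{\beta,t}=\prod_{i}\prod_{s\in R_0(X_{\gamma,t+ie_\beta})}X_{0,s}^{2\cdot 2^{d(X_{\gamma,t+ie_\beta},X_{0,s})}}=\prod_{s\in R_0(X_{\beta,t})}X_{0,s}^{2^{d(X_{\beta,t},X_{0,s})}}$, since $d(X_{\beta,t},X_{0,s})=1+d(X_{\gamma,t+ie_\beta},X_{0,s})$ and the exponents $n_s$ are positive. The limit case is analogous: $X_{\beta,t}=\prod_{t_\beta<\gamma\in\suc(\beta)}X_{\gamma,\tau_\beta(t)}^2$, each factor has height $\gamma<\beta$, each path out of $X_{\beta,t}$ starts with a unique such edge, and Lemma~\ref{lemma:tree} (together with the uniqueness argument in its proof, which shows the ending vertex $X_{0,s}$ determines $\gamma$) gives that $R_0(X_{\beta,t})$ is the disjoint union of the $R_0(X_{\gamma,\tau_\beta(t)})$; then the same substitution-and-product computation closes the induction.

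The main obstacle I anticipate is bookkeeping rather than any deep difficulty: one must be careful that the infinite products are well-defined formal objects (this is addressed by the paragraph preceding the lemma, where $F[\mathbf{X}]$ is described as the monoid algebra $F[M]$, so these "products" are really relations in $M$), and one must verify that for a fixed $r\in R_0$ only finitely many of the factors in the iterated product contribute to the exponent of $X_{0,r}$ — which follows because there is a \emph{unique} path from $X_{\beta,t}$ to $X_{0,r}$, hence a unique first edge, hence $r$ lies in exactly one of the sets $R_0(X_{\gamma,\cdot})$ being unioned. Once the disjointness of the decomposition $R_0(X_{\beta,t})=\bigsqcup R_0(X_{\gamma,\cdot})$ and the additivity $d(X_{\beta,t},X_{0,r})=1+d(X_{\gamma,\cdot},X_{0,r})$ of path length are in hand, the exponent identity $n_r=2^{d}$ drops out of the single extra factor of $2$ picked up at each level, and the positivity of $n_r$ is automatic. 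So the "In particular" clause is not a separate claim but is what the induction actually produces.
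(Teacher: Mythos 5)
Your proposal is correct and follows essentially the same route as the paper's proof: transfinite induction on $\beta$, decomposing $R_0(X_{\beta,t})$ as the disjoint union of the sets $R_0(Y)$ over the out-neighbours $Y$ of $X_{\beta,t}$ (disjointness via the uniqueness of paths from Lemma~\ref{lemma:tree}), and obtaining the exponent formula from the additivity $d(X_{\beta,t},X_{0,r})=d(Y,X_{0,r})+1$ together with the extra factor of $2$ at each level. The paper treats the successor and limit cases uniformly by working with the set of all out-neighbours at once, but this is only a presentational difference.
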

\begin{proof}
We proceed by induction on $\beta$. If $\beta=0$ then the claim is trivial. Suppose that the claim is true for ordinal numbers strictly smaller than $\beta$, and let $\mathcal{Y}$ be the set of all indeterminates $Y$ such that $(X,Y)$ is an edge. By definition and by induction, we have
\begin{equation*}
\begin{aligned}
X_{\beta,t}=\prod_{Y\in\mathcal{Y}}Y^2& =\prod_{Y\in\mathcal{Y}}\left(\prod_{r\in R_0(Y)}X_{0,r}^{n_r(Y)}\right)^2=\\
&=\prod_{Y\in\mathcal{Y}}\prod_{r\in R_0(Y)}X_{0,r}^{2n_r(Y)}
\end{aligned}
\end{equation*}
since if $(X,Y)$ is an edge then $Y=X_{\gamma,t'}$ with $\gamma<\beta$. If $Y\neq Y'$, we have $R_0(Y)\cap R_0(Y')=\emptyset$, because otherwise we would have two different paths from $X_{\beta,t}$ to $X_{0,s}$ (one through $Y$ and the other through $Y'$), against Lemma \ref{lemma:tree}; in particular, we have
\begin{equation*}
X_{\beta,t}=\prod_{r\in\bigcup_Y R_0(Y)}X_{0,r}^{n_r}
\end{equation*}
where $n_r=2n_r(Y)$ if $r\in R_0(Y)$. Moreover, any path from $X_{\beta,t}$ to $X_{0,s}$ must pass through (exactly) one $Y$: hence $R_0=\bigcup_YR_0(Y)$, and the existence of the product decomposition is proved.

Moreover, if the path from $X$ to $X_{0,r}$ passes through $Y$ and since $(X,Y)$ is an edge, $d(X,X_{0,r})=d(Y,X_{0,r})+1$; thus also the characterization of $n_r$ follows.
\end{proof}

\begin{lemma}\label{lemma:X0s-prime}
Let $s\in U_0$, and let $\{Z_\lambda\mid\lambda\in\Lambda\}\subseteq\XX$ be a family of indeterminates such that $Z:=\prod\{Z_\lambda\mid\lambda\in\Lambda\}\in F[\XX]$. Then:
\begin{enumerate}[(a)]
\item\label{lemma:X0s-prime:infprod} if $X_{0,s}$ divides $Z$, then it divides $Z_\lambda$ for some $\lambda\in\Lambda$;
\item\label{lemma:X0s-prime:prime} $X_{0,s}$ is a prime element of $F[\XX]$.
\end{enumerate}
\end{lemma}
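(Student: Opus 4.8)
The plan is to prove part (a) by means of an evaluation homomorphism, and then to deduce part (b) from (a) together with the description of $F[\XX]$ as a monoid algebra.

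For (a), fix $s\in U_0$. By Lemma \ref{lemma:product}, every element of $\XX$ equals a product of powers of height‑$0$ generators: $X_{0,r}$ itself for $r\in U_0$, and $\prod_{r\in R_0(X_{\beta,t})}X_{0,r}^{\,n_r}$ (with $n_r=2^{d(X_{\beta,t},X_{0,r})}$) for $X_{\beta,t}$ of height $\ge 1$. Hence the substitution $X_{0,s}\mapsto 0$, $X_{0,r}\mapsto 1$ for $r\ne s$, extends to an $F$‑algebra homomorphism $\varepsilon_s\colon F[\XX]\to F$ with $\varepsilon_s(W)=0$ if and only if $W=X_{0,s}$ or $W=X_{\beta,t}$ with $s\in R_0(X_{\beta,t})$; the only point needing care here is that the substitution is consistent with the defining infinite‑product relations, which follows from the disjointness $R_0(X_{\beta,t})=\bigsqcup_{(X_{\beta,t},Y)\in\mathcal{T}}R_0(Y)$ provided by Lemma \ref{lemma:tree}. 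Moreover, for $W\in\XX$ one has $X_{0,s}\mid W$ in $F[\XX]$ if and only if $\varepsilon_s(W)=0$: one direction is clear since $\varepsilon_s$ is a homomorphism, and for the other, if $s\in R_0(W)$ then Lemma \ref{lemma:product} exhibits $X_{0,s}^{\,2^{d(W,X_{0,s})}}$ as a factor of $W$. Now suppose $X_{0,s}\mid Z=\prod_\lambda Z_\lambda$. Applying $\varepsilon_s$ (which is compatible with the product) gives $0=\varepsilon_s(Z)=\prod_\lambda\varepsilon_s(Z_\lambda)$ in the field $F$; since each factor is $0$ or $1$, we must have $\varepsilon_s(Z_{\lambda_0})=0$ for some $\lambda_0$, and then $X_{0,s}\mid Z_{\lambda_0}$ by the preceding remark. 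This proves (a). (When the family is finite the product raises no issue, and this finite case is all that (b) will use.)

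For (b), recall $F[\XX]=F[M]$ with $M$ the monoid of monomials: $M$ is cancellative and torsion‑free (equivalently, $F[\XX]$ is a domain), it is an $F$‑basis of $F[\XX]$, and every divisor in $F[\XX]$ of a monomial is again a monomial. Applying (a) to the finitely many $\XX$‑factors of a product $mn$ of monomials yields $X_{0,s}\mid mn\iff X_{0,s}\mid m$ or $X_{0,s}\mid n$. By cancellativity, $m\mapsto X_{0,s}m$ is a bijection of $M$ onto $\{m'\in M:X_{0,s}\mid m'\}$, so $(X_{0,s})$ is the $F$‑span of the latter set, and therefore $\{m\in M:X_{0,s}\nmid m\}$ is an $F$‑basis of $\overline{R}:=F[\XX]/(X_{0,s})$, the product of two such classes vanishing in $\overline{R}$ exactly when $X_{0,s}$ divides the product of the corresponding monomials. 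Fix a translation‑invariant total order $\preceq$ on $M$ (one exists because $M$ embeds into its torsion‑free abelian group of fractions). Given $\overline{f},\overline{g}\ne 0$ in $\overline{R}$, let $m^\ast$ and $n^\ast$ be the $\preceq$‑largest monomials occurring in $f$, respectively $g$, that are not divisible by $X_{0,s}$; by the equivalence above, $X_{0,s}\nmid m^\ast n^\ast$, and any pair $(m_i,n_j)$ of monomials of $f,g$ with $m_in_j=m^\ast n^\ast$ satisfies $X_{0,s}\nmid m_i$ and $X_{0,s}\nmid n_j$ (else $X_{0,s}\mid m^\ast n^\ast$), hence $m_i\preceq m^\ast$, $n_j\preceq n^\ast$, and translation invariance together with cancellativity forces $m_i=m^\ast$, $n_j=n^\ast$. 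Thus $m^\ast n^\ast$ occurs in $fg$ with the product of the two nonzero leading coefficients, and as it is not divisible by $X_{0,s}$ it survives in $\overline{R}$; hence $\overline{fg}\ne 0$. Therefore $\overline{R}$ is a domain, and since $X_{0,s}$ is a nonzero non‑unit (for instance $\varepsilon_s(X_{0,s})=0$), it is a prime element of $F[\XX]$.

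The one genuinely delicate step — and the one I would dwell on — is verifying that $\varepsilon_s$ is a well‑defined $F$‑algebra homomorphism compatible with the infinite products of the construction; this is bookkeeping with Lemma \ref{lemma:product} and the uniqueness of paths in Lemma \ref{lemma:tree}. After that, (a) is immediate, and (b) is the familiar leading‑monomial argument for the monoid algebra of a cancellative, torsion‑free monoid.
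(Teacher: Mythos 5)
Your proof is correct, and part (b) takes a genuinely different route from the paper's. For (a) the two arguments are essentially the same: the paper embeds $F[\XX]$ into the ambient domain spanned by all infinite products $\prod_{r}X_{0,r}^{n_r}$ and reads off the exponent of $X_{0,s}$ there; your evaluation map $\varepsilon_s$ is precisely the indicator of that exponent being zero, and both its well-definedness and its compatibility with the possibly infinite product $\prod_\lambda Z_\lambda$ come down to the same bookkeeping with Lemma \ref{lemma:product} and Lemma \ref{lemma:tree} that underlies the embedding. One citation should be corrected: the implication ``$s\in R_0(W)\Rightarrow X_{0,s}\mid W$ \emph{in} $F[\XX]$'' does not follow from Lemma \ref{lemma:product} alone (which only exhibits the factorization in the ambient ring, where the cofactor need not lie in $F[\XX]$) but from Lemma \ref{lemma:path->divides}, which is also what the paper invokes at exactly this point. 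For (b) the paper again passes to the ambient domain, uses that $X_{0,s}$ is prime there (an assertion it does not prove) to reduce to the individual monomials $Y_i$, and then applies (a); you instead show directly that $F[\XX]/(X_{0,s})$ is a domain by a leading-term argument with respect to a translation-invariant total order on the cancellative, torsion-free monoid of monomials, the key input being the multiplicativity $X_{0,s}\mid mn\iff X_{0,s}\mid m$ or $X_{0,s}\mid n$ supplied by (a). Your version is more self-contained, at the price of invoking the orderability of torsion-free abelian groups; the paper's is shorter because it delegates the leading-term reasoning to the (implicit) structure of the ambient ring.
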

\begin{proof}
\ref{lemma:X0s-prime:infprod} Consider the vector space $D$ over $F$ generated by the infinite products $\prod_{r\in R}X_r^{n_r}$, where $R$ ranges among the subsets of $U_0$ and $n_r\in\insN$ is arbitrary. Then, $D$ has a natural ring structure, that makes it into an integral domain, and by Lemma \ref{lemma:product} there is a natural embedding $F[\XX]\longrightarrow D$; thus, we can consider $F[\XX]$ to be a subring of $D$.

Write each $Z_\lambda$ as a product $\prod\{X_{0,r}^{n_r(\lambda)}\mid r\in R_0(\lambda)\}$. If $X$ divides $Z$ in $F[\XX]$, it also divides $Z$ in $D$; therefore, there must be a $\lambda\in\Lambda$ such that $r\in R_0(\lambda)$. By definition of $R_0(\lambda)$, it follows that there is a path from $X_{0,s}$ to $Z_\lambda$; by Lemma \ref{lemma:path->divides} this implies that $X_{0,s}$ divides $Z_\lambda$ in $F[\XX]$.

\ref{lemma:X0s-prime:prime} Suppose that $X_{0,s}$ divides a product $YZ$ with $Y,Z\in F[\XX]$, and write $Y=\sum a_iY_i$, $Z=\sum_j b_jZ_j$ where each $Y_i$ and each $Z_j$ are distinct monomials. Then, $X_{0,s}$ divides $YZ$ in $D$, and since $X_{0,s}$ is prime in $D$, we can suppose without loss of generality that $X_{0,s}$ divides $Y$ in $D$; hence, $X_{0,s}$ divides each $Y_i$ in $D$. By the previous part of the proof, it follows that $X_{0,s}$ divides $Y_i$ also in $F[\XX]$, and thus $X_{0,s}$ divides $Y$ in $F[\XX]$. Hence $X_{0,s}$ is prime in $F[\XX]$.
\end{proof}

\begin{lemma}\label{lemma:path->divides}
Let $X,Y\in \mathbf{X}$. If there is a path from $X$ to $Y$ in $\mathcal{T}$, then $Y^2$ divides $X$ in $F[\mathbf{X}]$. 
\end{lemma}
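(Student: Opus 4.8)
The plan is to isolate the case of a single edge and then propagate along the path. Concretely, I would first prove: \emph{if $(X,Z)$ is an edge of $\mathcal{T}$, then $X=Z^{2}W$ for some monomial $W\in F[\mathbf{X}]$.} Granting this, take a path $(X=X_{0},X_{1},\dots,X_{n}=Y)$; such a path must have $n\geq 1$, since $Y^{2}$ never divides $Y$ in $F[\mathbf{X}]$. Writing $X_{j-1}=X_{j}^{2}W_{j}$ with $W_{j}$ a monomial, an immediate induction gives $X_{0}=X_{n}^{2^{n}}\cdot\prod_{j=1}^{n}W_{j}^{2^{j-1}}$; since $n\geq 1$ we have $2^{n}\geq 2$, so $Y^{2}=X_{n}^{2}$ divides $X_{0}=X$, the complementary factor $X_{n}^{2^{n}-2}\prod_{j}W_{j}^{2^{j-1}}$ being again a monomial. (Alternatively one could work inside the overring $D$ from the proof of Lemma \ref{lemma:X0s-prime}, where $Y^{2}\mid X$ is immediate from Lemma \ref{lemma:product} and Lemma \ref{lemma:tree} by comparing the exponents $2^{d(X,X_{0,r})}$ and $2\cdot 2^{d(Y,X_{0,r})}$; but one would then still have to check that $X/Y^{2}$ actually lies in $F[\mathbf{X}]$, which is exactly the content of the single-edge claim.)

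For the single-edge claim, suppose first $X=X_{\beta,t}$ with $\beta=\gamma+1\in\suc(\alpha)$; then $Z=X_{\gamma,t+ke_{\beta}}$ for some $k\in\mathbb{N}$ and $X_{\beta,t}=\prod_{i\in\mathbb{N}}X_{\gamma,t+ie_{\beta}}^{2}$. Pulling out the $i=k$ factor and re-indexing the tail as $\prod_{i>k}X_{\gamma,t+ie_{\beta}}^{2}=\prod_{j\in\mathbb{N}}X_{\gamma,(t+ke_{\beta})+je_{\beta}}^{2}=X_{\beta,t+ke_{\beta}}$ (the defining product for $X_{\beta,t+ke_{\beta}}$, since $t+ke_{\beta}\in U_{\beta}$), we obtain $X_{\beta,t}=X_{\gamma,t+ke_{\beta}}^{2}\cdot\bigl(\prod_{1\leq i<k}X_{\gamma,t+ie_{\beta}}^{2}\bigr)\cdot X_{\beta,t+ke_{\beta}}$, so $W=\bigl(\prod_{1\leq i<k}X_{\gamma,t+ie_{\beta}}^{2}\bigr)X_{\beta,t+ke_{\beta}}$ works.

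The delicate case is $X=X_{\beta,t}$ with $\beta\in\li(\alpha)$: here $Z=X_{\gamma,\tau_{\beta}(t)}$ with $\gamma\in\suc(\beta)$, $t_{\beta}<\gamma$, and $X_{\beta,t}=\prod_{t_{\beta}<\delta\in\suc(\beta)}X_{\delta,\tau_{\beta}(t)}^{2}$. Put $s:=\tau_{\beta}(t)$, so $s_{\delta}=0$ for every $\delta\leq\beta$. Splitting this infinite product at $\delta=\gamma$, the tail $\prod_{\gamma<\delta\in\suc(\beta)}X_{\delta,s}^{2}$ equals $X_{\beta,s+\gamma e_{\beta}}$ (again the defining product, now for $s+\gamma e_{\beta}\in U_{\beta}$). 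The point is to show the head $\prod_{t_{\beta}<\delta<\gamma,\,\delta\in\suc(\beta)}X_{\delta,s}^{2}$ is a \emph{finite} product of elements of $\mathbf{X}$. The key observation is that for any limit ordinal $\mu<\beta$ and any $\epsilon<\mu$ one has $\{\delta\in\suc(\mu):\delta>\epsilon\}=\{\delta\in\suc(\beta):\epsilon<\delta<\mu\}$, whence, applying the defining product formula for $X_{\mu,s+\epsilon e_{\mu}}$, $\prod_{t_{\beta}<\delta<\mu,\,\delta\in\suc(\beta)}X_{\delta,s}^{2}=X_{\mu,s+\epsilon e_{\mu}}$. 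If only finitely many successor ordinals lie in $(t_{\beta},\gamma)$ the head is trivially a monomial; otherwise, letting $\mu$ be the largest limit ordinal $\leq\gamma$ (so $\mu>t_{\beta}$, and $\gamma=\mu+n$ with $n\geq 1$ since the successor $\gamma$ is not the limit $\mu$), the head collapses to $X_{\mu,s+t_{\beta}e_{\mu}}\cdot\prod_{1\leq j<n}X_{\mu+j,s}^{2}$. Thus $W=X_{\mu,s+t_{\beta}e_{\mu}}\cdot\bigl(\prod_{1\leq j<n}X_{\mu+j,s}^{2}\bigr)\cdot X_{\beta,s+\gamma e_{\beta}}$ is the required monomial, after the routine check that every index sequence appearing here lies in the appropriate set $U_{\bullet}$.

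I expect the limit case to be the only genuine obstacle: everything hinges on recognizing the infinite head product $\prod_{t_{\beta}<\delta<\gamma}X_{\delta,s}^{2}$ as an honest finite product of elements of $\mathbf{X}$, which in turn rests on the ``collapse at a limit ordinal'' identity above. Once that identity is isolated, both the single-edge claim and the propagation along the path are purely mechanical.
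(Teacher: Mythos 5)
Your proof is correct and follows essentially the same route as the paper: reduce to the single-edge case, use the same three-way split of the defining infinite product (the $\varepsilon=\gamma$ factor, the head over successors in $(t_\beta,\gamma)$, and the tail $X_{\beta,\tau_\beta(t)+\gamma e_\beta}$), and absorb the infinite head via the identity $X_{\mu,\tau_\beta(t)+t_\beta e_\mu}=\prod_{t_\beta<\delta\in\suc(\mu)}X_{\delta,\tau_\beta(t)}^2$ at the largest limit ordinal $\mu\leq\gamma$, which is exactly the paper's argument with $\delta$ in place of your $\mu$. Your explicit propagation $X_0=X_n^{2^n}\prod_j W_j^{2^{j-1}}$ is just a more detailed version of the paper's closing appeal to finiteness of paths.
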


\begin{proof}
We start by showing that if $(X,Y)$ is an edge, then $Y^2$ divides $X$.

Let $\beta\in\suc(\alpha)$, where $\beta=\gamma+1$, let $t\in U_\beta$ and $k\in\mathbb{N}$. Then $(X_{\beta,t},X_{\gamma,t+ke_\beta})$ is an edge and we have \[
X_{\beta,t}=\prod_{i\in\mathbb{N}}X^2_{\gamma,t+ie_\beta}=\prod_{i=k+1}^{\infty}X_{\gamma,t+ie_\beta}^2\cdot \prod_{i=1}^{k}X_{\gamma,t+ie_\beta}^2.
\] Since\[X_{\beta,t+ke_\beta}=\prod_{i=k+1}^{\infty}X_{\gamma,t+ie_\beta}^2\] by definition and since $X_{\gamma,t+ke_\beta}^2$ clearly divides \[\prod_{i=1}^{k}X_{\gamma,t+ie_\beta}^2,\] it consequently also divides $X_{\beta,t}$ in $F[\mathbf{X}]$. \\

Let now $\beta\in \li(\alpha)$, $t\in U_\beta$ and let $\gamma\in\suc(\beta)$ such that $t_\beta<\gamma$. Moreover, we will write $\gamma=\delta+k$ for some $\delta\in\li(\alpha)\cup\{0\}$ and $k\in \mathbb{N}$. Then $(X_{\beta,t},X_{\gamma,\tau_\beta(t)})$ is an edge and we have \[X_{\beta,t}=\prod_{t_\beta<\varepsilon\in\suc(\beta)}X_{\varepsilon,\tau_\beta(t)}^2=X_{\gamma,\tau_\beta(t)}^2\cdot\prod_{t_\beta<\varepsilon\in\suc(\gamma)}X_{\varepsilon,\tau_\beta(t)}^2\cdot\prod_{\gamma<\varepsilon\in\suc(\beta)}X_{\varepsilon,\tau_\beta(t)}^2.\] 
By definition, we have \[X_{\beta,\tau_\beta(t)+\gamma e_\beta}=\prod_{\gamma<\varepsilon\in\suc(\beta)}X_{\varepsilon,\tau_\beta(t)}^2\] and thus it is enough to show that \[\prod_{t_\beta<\varepsilon\in\suc(\gamma)}X_{\varepsilon,\tau_\beta(t)}^2\in F[\mathbf{X}].\] If $\gamma<t_\beta+\omega$, then \[\prod_{t_\beta<\varepsilon\in\suc(\gamma)}X_{\varepsilon,\tau_\beta(t)}^2\] is a finite product of elements of $\mathbf{X}$ and hence contained in $F[\mathbf{X}]$.
If $t_\beta+\omega<\gamma$, then we can write \[\prod_{t_\beta<\varepsilon\in\suc(\gamma)}X_{\varepsilon,\tau_\beta(t)}^2=\prod_{i=1}^{k-1}X_{\delta+i,\tau_\beta(t)}^2\cdot\prod_{t_\beta<\varepsilon\in\suc(\delta)}X_{\varepsilon,\tau_\beta(t)}^2.\] 
Note that $t_\delta=0$ by definition of the set $U_\beta$ and hence \[\tau_\delta(\tau_\beta(t)+t_\beta e_\delta)=\tau_\delta(\tau_\beta(t))=\tau_\beta(t).\] This implies that \[X_{\delta,\tau_\beta(t)+t_\beta e_\delta}=\prod_{t_\beta<\varepsilon\in\suc(\delta)}X_{\varepsilon,\tau_\delta(\tau_\beta(t)+t_\beta e_\delta)}^2=\prod_{t_\beta<\varepsilon\in\suc(\delta)}X_{\varepsilon,\tau_\beta(t)}^2\] is contained in $F[\mathbf{X}]$ and we are done.   \\

Hence we have shown that if $(X,Y)$ is an edge in $\mathcal{T}$, then $Y^2$ divides $X$ in $F[\mathbf{X}]$. Since every path in $\mathcal{T}$ is finite, we can then state that the existence of a path from $X$ to $Y$ implies that $Y^2$ divides $X$.
\end{proof}

\begin{lemma}\label{lemma:X0s-divides}
Let $\beta$ be an ordinal and let $t\in U_\beta$, $s\in U_0$. Then, $X_{0,s}$ divides $X_{\beta,t}$ if and only if there is a path from $X_{\beta,t}$ to $X_{0,s}$.
\end{lemma}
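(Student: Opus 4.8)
The plan is to reduce both implications to the structural results already in hand, principally the height-$0$ product decomposition of $X_{\beta,t}$ (Lemma \ref{lemma:product}) and the primality/infinite-product divisibility of the indeterminates $X_{0,s}$ (Lemma \ref{lemma:X0s-prime}).

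For the ``if'' direction, suppose there is a path from $X_{\beta,t}$ to $X_{0,s}$. Then Lemma \ref{lemma:path->divides} already gives that $X_{0,s}^2$, and in particular $X_{0,s}$, divides $X_{\beta,t}$ in $F[\XX]$. (Equivalently, $s$ belongs to the set $R_0$ appearing in Lemma \ref{lemma:product}, so that $X_{0,s}^{n_s}$ divides $X_{\beta,t}$.) This direction therefore requires essentially no new argument.

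For the ``only if'' direction, assume $X_{0,s}$ divides $X_{\beta,t}$. By Lemma \ref{lemma:product} we may write $X_{\beta,t}=\prod_{r\in R_0}X_{0,r}^{n_r}$, where $R_0$ is exactly the set of $r\in U_0$ such that there is a path from $X_{\beta,t}$ to $X_{0,r}$. We view this as an indexed family of indeterminates from $\XX$ — each $X_{0,r}$ repeated $n_r$ times — whose product lies in $F[\XX]$, so Lemma \ref{lemma:X0s-prime}\ref{lemma:X0s-prime:infprod} applies and yields that $X_{0,s}$ divides one of these factors, i.e.\ $X_{0,s}\mid X_{0,r}$ for some $r\in R_0$. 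Since $\mathbf{X}_0$ is a set of independent indeterminates over $F$, the relation $X_{0,s}\mid X_{0,r}$ forces $s=r$, hence $s\in R_0$; by definition of $R_0$ this means there is a path from $X_{\beta,t}$ to $X_{0,s}$, as desired.

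I do not expect a genuine obstacle: the combinatorial and ring-theoretic content has all been front-loaded into Lemmas \ref{lemma:product}, \ref{lemma:X0s-prime}, and \ref{lemma:path->divides}. The only point needing a little care is the passage through the infinite product — one must present the decomposition of Lemma \ref{lemma:product} as a family of indeterminates so that Lemma \ref{lemma:X0s-prime}\ref{lemma:X0s-prime:infprod} applies verbatim, and then use independence of the indeterminates in $\mathbf{X}_0$ to upgrade ``$X_{0,s}\mid X_{0,r}$'' to ``$s=r$''.
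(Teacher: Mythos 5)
Your proof is correct and follows essentially the same route as the paper's: the ``if'' direction via Lemma \ref{lemma:path->divides}, and the ``only if'' direction by writing $X_{\beta,t}=\prod_{r\in R_0}X_{0,r}^{n_r}$ (Lemma \ref{lemma:product}) and applying Lemma \ref{lemma:X0s-prime}\ref{lemma:X0s-prime:infprod}. The only difference is that you make explicit the final step that $X_{0,s}\mid X_{0,r}$ forces $s=r$, which the paper leaves implicit.
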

\begin{proof}
By Lemma \ref{lemma:path->divides}, if such a path exists, then $X_{0,s}$ divides $X_{\beta,t}$. If $X_{0,s}$ divides $X_{\beta,t}$, the claim follows by writing $X_{\beta,t}=\prod_{r\in R_0}X_r^{n_r}$ and applying Lemma \ref{lemma:X0s-prime}\ref{lemma:X0s-prime:infprod}.


\end{proof}

\begin{remark}
Lemma \ref{lemma:X0s-divides} does not hold if we take some indeterminate $X_{\gamma,s}$ with $\gamma\neq 0$. Indeed, if $\gamma=\delta+1$, then $X_{\gamma,s}=X_{\gamma,s+e_s}X^2_{\delta,s+e_s}$, so that $X_{\gamma,s+e_s}$ divides $X_{\gamma,s}$, but there is no path from $X_{\gamma,s}$ to $X_{\gamma,s+e_s}$, since two indeterminates in a path cannot have the same height.
\end{remark}

\begin{lemma}\label{lemma:divide larger}
Let $0<\beta<\gamma<\alpha$ be ordinals and let $t,\Tilde{t}\in U_\beta$ and $s\in U_\gamma$.
\begin{enumerate}[(a)]
    \item\label{lemma:divide larger:beta} $X_{\beta,t}$ divides $X_{\beta,\Tilde{t}}$ if and only if $\tau_\beta(t)=\tau_\beta(\Tilde{t})$ and $t_\beta\ge \Tilde{t}_\beta$. In this case, $X_{\beta,\Tilde{t}}/X_{\beta,t}$ can be written as a product of elements in $\mathbf{X}$ of height less than $\beta$.
    \item\label{lemma:divide larger:gamma} $X_{\gamma,s}$ does not divide $X_{\beta,t}$.
\end{enumerate}
\end{lemma}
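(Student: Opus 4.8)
For part~\ref{lemma:divide larger:gamma} and the ``only if'' half of~\ref{lemma:divide larger:beta}, the plan is to translate divisibility between indeterminates into an inclusion of the sets $R_0(\cdot)$ of height-$0$ indeterminates reachable along a path (as in Lemma~\ref{lemma:product}), via Lemma~\ref{lemma:X0s-divides}: if $X_{\gamma,s}$ divides $X_{\beta,t}$ in $F[\mathbf{X}]$, then for every $r$ with a path $X_{\gamma,s}\to X_{0,r}$ one gets $X_{0,r}\mid X_{\gamma,s}\mid X_{\beta,t}$, hence a path $X_{\beta,t}\to X_{0,r}$; that is, $R_0(X_{\gamma,s})\subseteq R_0(X_{\beta,t})$, and similarly in the situation of~\ref{lemma:divide larger:beta}. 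The one combinatorial observation I will reuse is: if $\varepsilon<\varepsilon'$ are successor ordinals and $u$ is an index with $u_\varepsilon=u_{\varepsilon'}=0$, then $R_0(X_{\varepsilon,u})\cap R_0(X_{\varepsilon',u})=\emptyset$, because a path $X_{\varepsilon,u}\to X_{0,r}$ forces $r_{\varepsilon'}=u_{\varepsilon'}=0$ by Lemma~\ref{lemma:path-X0s}\ref{lemma:path-X0s:taubeta}, while a path $X_{\varepsilon',u}\to X_{0,r}$ forces $r_{\varepsilon'}>u_{\varepsilon'}=0$ by Lemma~\ref{lemma:path-X0s}\ref{lemma:path-X0s:sbeta}.

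For the ``only if'' direction of~\ref{lemma:divide larger:beta}, I would first pick any $r\in R_0(X_{\beta,t})\subseteq R_0(X_{\beta,\tilde t})$ (nonempty, since every indeterminate reaches $\mathbf{X}_0$) and apply Lemma~\ref{lemma:path-X0s}\ref{lemma:path-X0s:taubeta} along both paths to get $\tau_\beta(t)=\tau_\beta(r)=\tau_\beta(\tilde t)$. To obtain $t_\beta\ge\tilde t_\beta$ I split into cases. If $\beta=\gamma'+1$, extending the edge $X_{\beta,t}\to X_{\gamma',t+e_\beta}$ to a path into $\mathbf{X}_0$ yields an $r\in R_0(X_{\beta,t})$ with $r_\beta=t_\beta+1$; since also $r\in R_0(X_{\beta,\tilde t})$, Lemma~\ref{lemma:path-X0s}\ref{lemma:path-X0s:sbeta} gives $t_\beta+1=r_\beta>\tilde t_\beta$. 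If $\beta$ is a limit ordinal and, for contradiction, $t_\beta<\tilde t_\beta$, I pick a successor ordinal $\varepsilon_0$ with $t_\beta<\varepsilon_0\le\tilde t_\beta$ and an $r\in R_0(X_{\varepsilon_0,\tau_\beta(t)})\subseteq R_0(X_{\beta,t})\subseteq R_0(X_{\beta,\tilde t})$; the first edge of a path $X_{\beta,\tilde t}\to X_{0,r}$ exhibits $r\in R_0(X_{\varepsilon_1,\tau_\beta(\tilde t)})=R_0(X_{\varepsilon_1,\tau_\beta(t)})$ for some successor $\varepsilon_1>\tilde t_\beta\ge\varepsilon_0$, contradicting the observation above (used with $u=\tau_\beta(t)$, whose coordinates of index $<\beta$ all vanish).

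For the ``if'' direction together with the description of the quotient, I would argue directly from the products defining $X_{\beta,\cdot}$. Setting $u:=\tau_\beta(t)=\tau_\beta(\tilde t)$, the hypotheses say that $t$ and $\tilde t$ agree outside the coordinate $\beta$ with $t_\beta\ge\tilde t_\beta$. If $\beta=\gamma'+1$, the identity $t+ie_\beta=\tilde t+(t_\beta-\tilde t_\beta+i)e_\beta$ shows that the index set of $X_{\beta,t}=\prod_{i\ge1}X_{\gamma',t+ie_\beta}^2$ is obtained from that of $X_{\beta,\tilde t}=\prod_{i\ge1}X_{\gamma',\tilde t+ie_\beta}^2$ by dropping finitely many terms, so $X_{\beta,\tilde t}=\bigl(\prod_{i=1}^{t_\beta-\tilde t_\beta}X_{\gamma',\tilde t+ie_\beta}^2\bigr)\cdot X_{\beta,t}$, a finite product of elements of height $\gamma'<\beta$ times $X_{\beta,t}$. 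If $\beta$ is a limit ordinal, then $X_{\beta,\tilde t}=Q\cdot X_{\beta,t}$ with $Q=\prod_{\tilde t_\beta<\varepsilon\le t_\beta,\ \varepsilon\in\suc(\beta)}X_{\varepsilon,u}^2$; when $t_\beta<\tilde t_\beta+\omega$ this is a finite product of squares of elements of height $<\beta$, and otherwise I take $\mu$ to be the largest limit ordinal $\le t_\beta$ (so $\tilde t_\beta<\mu\le t_\beta<\beta$, $t_\beta=\mu+n$) and $v\in U_\mu$ with $v_\mu=\tilde t_\beta$ and $\tau_\mu(v)=u$; then the product defining $X_{\mu,v}$ is precisely the part of $Q$ indexed by the successor ordinals in $(\tilde t_\beta,\mu)$, whence $Q=X_{\mu,v}\cdot\prod_{j=1}^{n}X_{\mu+j,u}^2\in F[\mathbf{X}]$, again a product of elements of height $\le t_\beta<\beta$.

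Finally, for~\ref{lemma:divide larger:gamma}: assuming $X_{\gamma,s}$ divides $X_{\beta,t}$, we have $R_0(X_{\gamma,s})\subseteq R_0(X_{\beta,t})$, and since $\gamma>\beta$, Lemma~\ref{lemma:path-X0s}\ref{lemma:path-X0s:taubeta} forces $r_\delta=t_\delta$ for every $\delta>\beta$ and every $r\in R_0(X_{\gamma,s})$. If $\gamma=\gamma'+1$, extending the edges $X_{\gamma,s}\to X_{\gamma',s+e_\gamma}$ and $X_{\gamma,s}\to X_{\gamma',s+2e_\gamma}$ to paths into $\mathbf{X}_0$ produces $r,r'\in R_0(X_{\gamma,s})$ with $r_\gamma=s_\gamma+1\ne s_\gamma+2=r'_\gamma$, contradicting $r_\gamma=t_\gamma=r'_\gamma$. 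If $\gamma$ is a limit ordinal, I choose a successor ordinal $\varepsilon$ with $\max\{\beta,s_\gamma\}<\varepsilon<\gamma$ large enough that $t_\varepsilon=0$ (possible since $t$ has finitely many nonzero coordinates and $\gamma$ is a limit), and extend the edge $X_{\gamma,s}\to X_{\varepsilon,\tau_\gamma(s)}$ to get an $r\in R_0(X_{\gamma,s})$ with $r_\varepsilon>(\tau_\gamma(s))_\varepsilon=0$ by Lemma~\ref{lemma:path-X0s}\ref{lemma:path-X0s:sbeta}, whereas $r_\varepsilon=t_\varepsilon=0$, a contradiction. The step I expect to be the real obstacle is the limit case of~\ref{lemma:divide larger:beta}: one must recognize a cofinal initial piece of the infinite quotient $Q$ as a single indeterminate $X_{\mu,v}$, and checking $v\in U_\mu$ and $\tau_\mu(v)=u$ amounts to carefully unwinding the definitions of $U_\mu$ and $\tau_\mu$; everything else is routine path bookkeeping.
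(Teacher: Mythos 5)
Your proposal is correct and follows essentially the same route as the paper: the ``if'' direction by the same explicit factorization of the quotient (including the key step of absorbing the cofinal piece of the infinite product into a single indeterminate $X_{\mu,v}$ of limit height $\mu\le t_\beta$), and the ``only if'' direction and part (b) by reducing divisibility to the existence of paths to $\mathbf{X}_0$ and comparing coordinates via Lemma \ref{lemma:path-X0s}. Your path-counting argument for $t_\beta\ge\tilde t_\beta$ is in fact a little more explicit than the paper's (which just appeals to ``the first part of the proof''), but it is the same idea.
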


\begin{proof}
\ref{lemma:divide larger:beta} Suppose first that $\tau_\beta(t)=\tau_\beta(\Tilde{t})$ and $t_\beta\ge\Tilde{t}_\beta$. If $\beta$ is a successor ordinal, say $\beta=\delta+1$, then we have \[X_{\beta,\Tilde{t}}=X_{\beta,t}\cdot\prod_{i=1}^{t_\beta-\Tilde{t}_\beta}X_{\delta,\Tilde{t}+ie_\beta}^2\] by definition. Hence $X_{\beta,t}$ divides $X_{\beta,\Tilde{t}}$ in $F[\mathbf{X}]$ and $X_{\beta,\Tilde{t}}/X_{\beta,t}$ is a product of elements in $\mathbf{X}_\delta$.

If $\beta$ is a limit ordinal, then we have \[X_{\beta,\Tilde{t}}=\prod_{\Tilde{t}_\beta<\varepsilon\in\suc(\beta)}X_{\varepsilon,\tau_\beta(\Tilde{t})}^2\] and \[X_{\beta,t}=\prod_{t_\beta<\varepsilon\in\suc(\beta)}X_{\varepsilon,\tau_\beta(t)}^2.\] Since $\tau_\beta(t)=\tau_\beta(\Tilde{t})$ and $t_\beta\ge\Tilde{t}_\beta$, we obtain \[X_{\beta,\Tilde{t}}/X_{\beta,t}=\prod_{\Tilde{t}_\beta<\varepsilon\in\suc(t_\beta+1)}X_{\varepsilon,\tau_\beta(t)}^2.\] If $t_\beta<\Tilde{t}_\beta+\omega$, then this product is finite and thus contained in $F[\mathbf{X}]$. Moreover, it is a product of indeterminates of height less than $\beta$. Otherwise we can write $t_\beta=\delta+k$, where $\delta\in\li(\beta)\cup\{0\}$, $\Tilde{t}_\beta<\delta$, $k\in\mathbb{N}_0$ and we obtain \[\prod_{\Tilde{t}_\beta<\varepsilon\in\suc(t_\beta+1)}X_{\varepsilon,\tau_\beta(t)}^2=\prod_{i=1}^{k}X_{\delta+i,\tau_\beta(t)}^2\cdot\prod_{\Tilde{t}_\beta<\varepsilon\in\suc(\delta)}X_{\varepsilon,\tau_\beta(t)}^2.\] Note that $t_\delta=0$ by definition of the set $U_\beta$ and hence \[\tau_\delta(\tau_\beta(t)+\Tilde{t}_\beta e_\delta)=\tau_\delta(\tau_\beta(t))=\tau_\beta(t).\] This implies that \[X_{\delta,\tau_\beta(t)+\Tilde{t}_\beta e_\delta}=\prod_{\Tilde{t}_\beta<\varepsilon\in\suc(\delta)}X_{\varepsilon,\tau_\delta(\tau_\beta(t)+\Tilde{t}_\beta e_\delta)}^2=\prod_{\Tilde{t}_\beta<\varepsilon\in\suc(\delta)}X_{\varepsilon,\tau_\beta(t)}^2\] is contained in $F[\mathbf{X}]$ and that $X_{\beta,\Tilde{t}}/X_{\beta,t}$ can be written as a product of indeterminates of height less than $\beta$, which proves our claim.\\

Suppose now that $X_{\beta,t}$ divides $X_{\beta,\Tilde{t}}$. Take $r\in U_0$ such that $X_{0,r}$ divides $X_{\beta,t}$. By Lemma 0.3., there is a path from $X_{\beta,t}$ to $X_{0,r}$ and a path from $X_{\beta,\Tilde{t}}$ to $X_{0,r}$. But then \[\tau_\beta(t)=\tau_\beta(r)=\tau_\beta(\Tilde{t})\] and by the first part of the proof it becomes clear that $t_\beta\ge\Tilde{t}_\beta$. \\

\ref{lemma:divide larger:gamma} Suppose first that $\gamma$ is a successor ordinal, say $\gamma=\delta+1$. Let $k$ be a positive integer such that $t_\gamma\neq s_\gamma+k$. Then there is $r\in U_0$ such that there is a path from $X_{\delta,s+ke_\gamma}$ to $X_{0,r}$. Since $(X_{\gamma,s},X_{\delta, s+ke_\gamma})$ is an edge, we obtain by Lemma \ref{lemma:X0s-divides} that $X_{0,r}$ divides $X_{\gamma,s}$. Moreover, by Lemma \ref{lemma:path-X0s}\ref{lemma:path-X0s:taubeta}, we have $\tau_\delta(s+ke_\gamma)=\tau_\delta(r)$ and thus $r_\gamma=s_\gamma+k$. Hence $X_{0,r}$ cannot divide $X_{\beta,t}$, since $\tau_\beta(t)=\tau_\beta(r)$ implies $t_\gamma=r_\gamma$, which contradicts the choice of $k$.

If $\gamma$ is a limit ordinal, then there is $\delta\in\suc(\gamma)$ such that $\beta<\delta$ and $t_\delta=0$ and we find an $r\in U_0$ such that there is a path from $X_{\delta,\tau_\gamma(s)}$ to $X_{0,r}$. This implies that $X_{0,r}$ divides $X_{\gamma,s}$ and by Lemma \ref{lemma:path-X0s}\ref{lemma:path-X0s:sbeta}, we obtain that $r_\delta>0$. Then $X_{0,r}$ cannot divide $X_{\beta,t}$, since $\tau_\beta(t)=\tau_\beta(r)$ implies $r_\delta=t_\delta=0$, which yields a contradiction.
\end{proof}

Before showing the existence of the greatest common divisor of two elements of $\mathbf{X}$, we need another lemma.

\begin{lemma}\label{lemma:limit div}
    Let $\beta,\gamma\in\li(\alpha)$ with $\beta<\gamma$ and let $t\in U_\beta,s\in U_\gamma$ with $\tau_\beta(t)=\tau_\gamma(s)$ and $s_\gamma\le t_\beta$. Then $X_{\beta,t}$ divides $X_{\gamma,s}$ in $F[\mathbf{X}]$.
\end{lemma}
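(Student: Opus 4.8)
The plan is to prove the divisibility by writing out the infinite products defining $X_{\beta,t}$ and $X_{\gamma,s}$ explicitly and exhibiting $X_{\gamma,s}$ as $X_{\beta,t}$ times an element of $F[\mathbf{X}]$, in the same style as the proof of Lemma \ref{lemma:path->divides}. Put $\sigma := \tau_\beta(t) = \tau_\gamma(s)$. Since $\tau_\gamma$ kills every coordinate of index $\le\gamma$, the sequence $\sigma$ is supported on coordinates of index $>\gamma$; in particular $\sigma_\varepsilon = 0$ for all $\varepsilon\le\gamma$, while $\sigma_\varepsilon\in\insN_0$ for every successor ordinal $\varepsilon$ and only finitely many $\sigma_\varepsilon$ are nonzero. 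Also, $\beta\in\li(\alpha)$ and $t\in U_\beta$ force $t_\beta<\beta$, so that $s_\gamma\le t_\beta<\beta<\gamma$.

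Expanding the definition of the limit vertex $X_{\gamma,s}$ gives $X_{\gamma,s} = \prod\{X_{\varepsilon,\sigma}^2 \mid \varepsilon\in\suc(\gamma),\ s_\gamma<\varepsilon\}$. Now partition the set of successor ordinals $\varepsilon$ with $s_\gamma<\varepsilon<\gamma$ into three blocks: those with $s_\gamma<\varepsilon\le t_\beta$, those with $t_\beta<\varepsilon<\beta$, and those with $\beta<\varepsilon<\gamma$ (the value $\varepsilon=\beta$ does not occur, $\beta$ being a limit ordinal). This writes $X_{\gamma,s} = (A)\cdot(B)\cdot(C)$, where each of $(A),(B),(C)$ is the product of the corresponding factors $X_{\varepsilon,\sigma}^2$. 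By the definition of $X_{\beta,t}$ — a product over $\{\varepsilon\in\suc(\beta)\mid t_\beta<\varepsilon\}$ with second subscript $\tau_\beta(t)=\sigma$ — the middle block is exactly $(B)=X_{\beta,t}$. Hence it is enough to prove $(A),(C)\in F[\mathbf{X}]$: then $X_{\gamma,s}/X_{\beta,t} = (A)(C)\in F[\mathbf{X}]$, which is the assertion.

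For $(C)$, I would check that $\sigma+\beta e_\gamma\in U_\gamma$ (all the defining conditions are immediate, since $\sigma$ vanishes below $\gamma$ and $\beta<\gamma$) and that unwinding the definition of $X_{\gamma,\sigma+\beta e_\gamma}$ yields precisely $\prod\{X_{\varepsilon,\sigma}^2\mid\varepsilon\in\suc(\gamma),\ \beta<\varepsilon\}=(C)$; thus $(C)=X_{\gamma,\sigma+\beta e_\gamma}\in F[\mathbf{X}]$. For $(A)$ there are two cases. If $t_\beta<s_\gamma+\omega$, then only finitely many ordinals lie in the interval $(s_\gamma,t_\beta]$, all of them successors, so $(A)$ is a finite product of elements of $\mathbf{X}$ and lies in $F[\mathbf{X}]$. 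Otherwise, write $t_\beta=\delta+k$ with $\delta$ a limit ordinal and $k\in\insN_0$; one sees $\delta>s_\gamma$ (else $t_\beta=\delta+k\le s_\gamma+k<s_\gamma+\omega$), and splitting the block $(s_\gamma,t_\beta]$ at $\delta$ gives $(A)=X_{\delta,\sigma+s_\gamma e_\delta}\cdot\prod_{i=1}^{k}X_{\delta+i,\sigma}^2$, where $X_{\delta,\sigma+s_\gamma e_\delta}\in F[\mathbf{X}]$ by the same argument as for $(C)$ (using $\delta<\gamma$ and $s_\gamma<\delta$) and the trailing product is finite. Either way $(A)\in F[\mathbf{X}]$, completing the proof.

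The one place requiring care — and the only real obstacle — is the ordinal bookkeeping: at each step one must verify that the second subscripts that appear ($\sigma$, $\sigma+\beta e_\gamma$, $\sigma+s_\gamma e_\delta$) genuinely belong to the relevant sets $U_\varepsilon$, $U_\gamma$, $U_\delta$, so that the corresponding vertices exist, and that $\tau_\gamma$ and $\tau_\delta$ fix $\sigma$ on the coordinates that matter. All of these checks follow mechanically from the fact that $\sigma=\tau_\gamma(s)$ is supported above $\gamma$ together with the chain of inequalities $s_\gamma\le t_\beta<\beta<\gamma$; no idea beyond the infinite-product manipulations already used above is involved.
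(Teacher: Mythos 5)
Your proof is correct and follows essentially the same route as the paper: expand the infinite product defining $X_{\gamma,s}$ and split it into blocks, identifying the middle block with $X_{\beta,t}$ and the top block with $X_{\gamma,\tau_\gamma(s)+\beta e_\gamma}$. The only difference is that the paper first invokes Lemma \ref{lemma:divide larger}\ref{lemma:divide larger:beta} to reduce to the case $t_\beta=s_\gamma$, whereas you keep the bottom block $(A)$ and re-derive its membership in $F[\mathbf{X}]$ by the same case analysis already used there.
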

\begin{proof}
We first note that $\tau_\beta(t)+s_\gamma e_\beta\in U_\beta$ since $s_\gamma\leq t_\beta<\beta$. By Lemma \ref{lemma:divide larger}\ref{lemma:divide larger:beta}, $X_{\beta,t}$ divides $X_{\beta,\tau_\beta(t)+s_\gamma e_\beta}$; therefore, it suffices to show that $X_{\beta,\tau_\beta(t)+s_\gamma e_\beta}$ divides $X_{\gamma,s}$. Moreover, since $\tau_\beta(t)=\tau_\beta(\tau_\beta(t)+s_\gamma e_\beta)$, we can assume that $t_\beta=s_\gamma$.

By definition, we have
\[X_{\beta,t}=\prod_{t_\beta<\delta\in\suc(\beta)}X_{\delta,\tau_\beta(t)}^2\]
and 
\[X_{\gamma,s}=\prod_{s_\gamma<\delta\in\suc(\gamma)}X_{\delta,\tau_\gamma(s)}^2.\]
Since $\tau_\beta(t)=\tau_\gamma(s)$ and $s_\gamma= t_\beta$, we can write \[X_{\gamma,s}=X_{\beta,t}\cdot\prod_{\beta<\delta\in\suc(\gamma)}X_{\delta,\tau_\gamma(s)}^2=X_{\beta,t}\cdot X_{\gamma,\tau_\gamma(s)+\beta e_\gamma},\] which proves the claim. 
\end{proof}

\begin{proposition}\label{prop:gcd}
Let $\beta< \gamma<\alpha$ be ordinals and let $t,\Bar{t}\in U_\beta$ and $s\in U_\gamma$.
\begin{enumerate}[(a)]
    \item\label{prop:gcd:beta=gamma} If $X_{\beta,t}$ and $X_{\beta,\Bar{t}}$ have a common divisor in $\mathbf{X}_0$, then \[\gcd(X_{\beta,t},X_{\beta,\Bar{t}})\in\{X_{\beta,t},X_{\beta,\Bar{t}}\}.\] 
    \item\label{prop:gcd:betaSc} If $\beta\in\suc(\alpha)$ and $X_{\beta,t}$ and $X_{\gamma,s}$ have a common divisor in $\mathbf{X}_0$, then \[\gcd(X_{\beta,t},X_{\gamma,s})=X_{\beta,t}.\] 
    \item\label{prop:gcd:betaLim} If $\beta\in \li(\alpha)$ and $X_{\beta,t}$ and $X_{\gamma,s}$ have a common divisor in  $\mathbf{X}_0$, then there is an ordinal $\delta$ with $t_\beta\le\delta<\beta$ such that \[\gcd(X_{\beta,t},X_{\gamma,s})=X_{\beta,\tau_\beta(t)+\delta e_\beta}.\]
\end{enumerate}
\end{proposition}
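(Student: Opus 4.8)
\emph{Plan.} Throughout I use the dictionary between divisibility and paths: by Lemma~\ref{lemma:X0s-divides}, ``$X_{\beta,t}$ and $X_{\gamma,s}$ have a common divisor in $\mathbf{X}_0$'' means exactly that some $X_{0,r}$ is reachable by a path from $X_{\beta,t}$ and by a path from $X_{\gamma,s}$, after which Lemma~\ref{lemma:path-X0s} pins down the coordinates of $r$. Part~\ref{prop:gcd:beta=gamma} is then immediate: fixing such an $r$, Lemma~\ref{lemma:path-X0s}\ref{lemma:path-X0s:taubeta} applied to both paths gives $\tau_\beta(t)=\tau_\beta(r)=\tau_\beta(\Bar t)$; comparing $t_\beta$ with $\Bar t_\beta$ and invoking Lemma~\ref{lemma:divide larger}\ref{lemma:divide larger:beta}, whichever of $X_{\beta,t},X_{\beta,\Bar t}$ has the larger $\beta$-th index divides the other, and hence is their $\gcd$.

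For part~\ref{prop:gcd:betaSc} I would induct on $\gamma$. Fix a common divisor $X_{0,r}$, hence a path from $X_{\gamma,s}$ to $X_{0,r}$; when $\gamma$ is a successor take its first edge, and when $\gamma$ is a limit use Lemma~\ref{lemma:X0s-prime}\ref{lemma:X0s-prime:infprod} to see that $X_{0,r}$ divides one factor $X_{\varepsilon_1,\tau_\gamma(s)}$ of $X_{\gamma,s}=\prod_{s_\gamma<\varepsilon\in\suc(\gamma)}X_{\varepsilon,\tau_\gamma(s)}^2$ and start from that factor. In every case one gets a vertex $Z$ of some height $\gamma'<\gamma$ with $Z^2\mid X_{\gamma,s}$ and $X_{0,r}\mid Z$. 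The only way to have $\gamma'<\beta$ is a limit step that jumps over $\beta$; but reading Lemma~\ref{lemma:path-X0s}\ref{lemma:path-X0s:taubeta} at coordinate $\beta$ along the remaining path would then force $r_\beta=0$, against $r_\beta>t_\beta\ge 0$ from Lemma~\ref{lemma:path-X0s}\ref{lemma:path-X0s:sbeta} (applicable since $\beta$ is a successor). So $\gamma'\ge\beta$: if $\gamma'>\beta$ apply the inductive hypothesis to $X_{\beta,t}$ and $Z$, and if $\gamma'=\beta$ write $Z=X_{\beta,u}$ and note that $u_\beta=0\le t_\beta$ (the $\beta$-th coordinate of anything obtained from an element of $U_\gamma$ by applying $\tau_\gamma$ or adding a multiple of $e_\gamma$ vanishes), so Lemma~\ref{lemma:divide larger}\ref{lemma:divide larger:beta} gives $X_{\beta,t}\mid Z$. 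Either way $X_{\beta,t}\mid Z^2\mid X_{\gamma,s}$, so $\gcd(X_{\beta,t},X_{\gamma,s})=X_{\beta,t}$.

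For part~\ref{prop:gcd:betaLim} write $X_{\beta,t}=\prod_{\varepsilon\in\suc(\beta),\,\varepsilon>t_\beta}X_{\varepsilon,\tau_\beta(t)}^2$ and recall (Lemmas~\ref{lemma:product} and~\ref{lemma:tree}) that the supports $R_0(X_{\varepsilon,\tau_\beta(t)})$ are pairwise disjoint. Let $B$ be the set of those $\varepsilon$ for which $X_{\varepsilon,\tau_\beta(t)}$ and $X_{\gamma,s}$ share a common divisor in $\mathbf{X}_0$; by hypothesis together with Lemma~\ref{lemma:X0s-prime}\ref{lemma:X0s-prime:infprod} it is nonempty, and by part~\ref{prop:gcd:betaSc} (each such $\varepsilon$ is a successor with $\varepsilon<\beta<\gamma$) every $\varepsilon\in B$ satisfies $X_{\varepsilon,\tau_\beta(t)}\mid X_{\gamma,s}$. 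The heart of the matter is to show that $B$ is an upper interval $\{\varepsilon\in\suc(\beta):\delta<\varepsilon\}$ for a uniquely determined ordinal $\delta$ with $t_\beta\le\delta<\beta$: intuitively $\delta$ records where a path from $X_{\gamma,s}$ to a common divisor crosses height $\beta$, and the ``no jump'' argument of part~\ref{prop:gcd:betaSc} (now using that $\beta$ is a limit) is what forbids it from slipping below $\beta$ unnoticed. Granting this, $\prod_{\varepsilon\in B}X_{\varepsilon,\tau_\beta(t)}^2=X_{\beta,\tau_\beta(t)+\delta e_\beta}$ is a genuine vertex; it divides $X_{\beta,t}$ by Lemma~\ref{lemma:divide larger}\ref{lemma:divide larger:beta} and divides $X_{\gamma,s}$, while any common divisor $Y$ of $X_{\beta,t}$ and $X_{\gamma,s}$ is a monomial all of whose factors have height at most $\beta$ (Lemma~\ref{lemma:divide larger}\ref{lemma:divide larger:gamma}) and whose $\mathbf{X}_0$-support lies in $R_0(X_{\beta,t})\cap R_0(X_{\gamma,s})=\bigsqcup_{\varepsilon\in B}R_0(X_{\varepsilon,\tau_\beta(t)})$ by disjointness, which forces $Y\mid X_{\beta,\tau_\beta(t)+\delta e_\beta}$. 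Alternatively one can induct on $\gamma$ exactly as in part~\ref{prop:gcd:betaSc}: for $\gamma=\gamma'+1$ only the single value $k_0=t_\gamma-s_\gamma$ of $k$ makes $R_0(X_{\beta,t})\cap R_0(X_{\gamma',s+ke_\gamma})$ nonempty (since $\gamma>\beta$ forces $r_\gamma=t_\gamma$ for any common divisor $r$), reducing the computation to $\gcd(X_{\beta,t},X_{\gamma',s+k_0e_\gamma})$ with the \emph{same} $\delta$; for $\gamma$ a limit one peels off one factor and splits on its height against $\beta$, landing in part~\ref{prop:gcd:beta=gamma} in the boundary case and reading off $\delta=u_\beta\in[t_\beta,\beta)$ there.

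I expect the main obstacle to be in part~\ref{prop:gcd:betaLim}: identifying $\delta$ precisely and then verifying that $X_{\beta,\tau_\beta(t)+\delta e_\beta}$ --- an infinite product of squares --- is \emph{literally} the greatest common divisor in $F[\mathbf{X}]$, i.e.\ both that it divides $X_{\gamma,s}$ and that nothing larger does. Disjointness of the $\mathbf{X}_0$-supports reduces this to statements about single factors provable from Lemma~\ref{lemma:X0s-prime}\ref{lemma:X0s-prime:infprod} and Lemma~\ref{lemma:limit div}, but carrying it through requires tracking, along the relevant path, whether each height encountered (and $\delta$ itself) is a limit or a successor and applying the matching product identities from the Remark; this case bookkeeping, rather than any single hard idea, is where the real work lies.
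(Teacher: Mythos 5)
Your parts \ref{prop:gcd:beta=gamma} and \ref{prop:gcd:betaSc} are correct and follow essentially the paper's route: part \ref{prop:gcd:beta=gamma} is the same argument verbatim, and your induction on $\gamma$ in part \ref{prop:gcd:betaSc} is just a step-by-step repackaging of the paper's argument that the unique path from $X_{\gamma,s}$ to $X_{0,r}$ cannot jump over height $\beta$ (your contradiction $r_\beta=0$ versus $r_\beta>t_\beta$ is exactly the paper's), so that $X_{\beta,\tau_\beta(t)}$ lies on the path and $X_{\beta,t}$ divides it.

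Part \ref{prop:gcd:betaLim}, however, has a genuine gap, and you flag it yourself: the entire content of the statement is the claim you introduce with ``granting this,'' namely that the set $B=\{\varepsilon\in\suc(\beta):X_{\varepsilon,\tau_\beta(t)}\text{ and }X_{\gamma,s}\text{ share a divisor in }\XX_0\}$ is an upper interval $(\delta,\beta)$ with $t_\beta\le\delta<\beta$. Nothing in your sketch establishes this, and it is precisely where the paper works: it locates the smallest limit height $\beta_2\ge\beta$ met by the path from $X_{\gamma,s}$ to $X_{0,r}$ and splits into $\beta_2=\beta$, $\beta<\beta_2<\gamma$, $\beta_2=\gamma$. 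In the first two cases $X_{\beta,t}$ itself divides $X_{\gamma,s}$ (so $\delta=t_\beta$); only in the third case, and only when $s_\gamma>t_\beta$, is $\delta=s_\gamma$ strictly larger, and there one must prove the coprimality of $Y=X_{\beta,t}/X_{\beta,\tau_\beta(t)+s_\gamma e_\beta}=\prod_{t_\beta<\varepsilon\in\suc(s_\gamma+1)}X_{\varepsilon,\tau_\beta(t)}^2$ with $X_{\gamma,s}$ --- i.e.\ that no $\varepsilon\le s_\gamma$ lies in $B$. That step is a nontrivial coordinate-chasing argument (tracking the successor ordinal $\beta'>s_\gamma$ through which a path from $X_{\gamma,s}$ to a putative common divisor $X_{0,r'}$ must pass, and deriving $t_{\beta'}=0$ versus $r'_{\beta'}>0$), not mere bookkeeping. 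A second, smaller omission: even once $B=(\delta,\beta)$ is known, you still need that the \emph{infinite product} $X_{\beta,\tau_\beta(t)+\delta e_\beta}=\prod_{\varepsilon\in B}X_{\varepsilon,\tau_\beta(t)}^2$ divides $X_{\gamma,s}$ in $F[\XX]$, which does not follow from divisibility by each factor separately; the paper gets this from Lemma \ref{lemma:limit div} (using $\tau_\beta(t)=\tau_\gamma(s)$ and $\delta<\beta$), which you name but do not apply. As written, part \ref{prop:gcd:betaLim} is a correct plan with its decisive steps deferred rather than a proof.
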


\begin{proof} 
\ref{prop:gcd:beta=gamma} Let $r\in U_0$ be such that $X_{0,r}$ divides both $X_{\beta,t}$ and $X_{\beta,\Bar{t}}$. By Lemma \ref{lemma:X0s-divides}, there exists a path from $X_{\beta,t}$ to $X_{0,r}$ and a path from $X_{\beta,\Bar{t}}$ to $X_{0,r}$. By Lemma \ref{lemma:path-X0s}\ref{lemma:path-X0s:taubeta}, we obtain \[\tau_\beta(t)=\tau_\beta(r)=\tau_\beta(\Bar{t})\] and the desired result then follows from Lemma \ref{lemma:divide larger}\ref{lemma:divide larger:beta}.

\ref{prop:gcd:betaSc} Let $r\in U_0$ be such that $X_{0,r}$ divides both $X_{\beta,t}$ and $X_{\gamma,s}$. By Lemma \ref{lemma:tree}, there is a unique path from $X_{\gamma,s}$ to $X_{0,r}$: we claim that $X_{\beta,\tau_\beta(t)}$ lies on this path.

Suppose first that there is no $\Tilde{t}\in U_\beta$ such that $X_{\beta,\Tilde{t}}$ lies on the path from $X_{\gamma,s}$ to $X_{0,r}$. Then there are ordinals $\beta_1$ and $\beta_2$ with $\beta_1<\beta<\beta_2$ and elements $t_1\in U_{\beta_1}$ and $t_2\in U_{\beta_2}$ such that the edge $(X_{\beta_2,t_2},X_{\beta_1,t_1})$ lies on the path. This implies that there is a path from $X_{\beta_1,t_1}$ to $X_{0,r}$ and we have $\tau_{\beta_1}(t_1)=\tau_{\beta_1}(r)$ and thus $r_\beta=t_{1,\beta}$. Moreover, by definition of the edges in $\mathcal{T}$, $t_1$ and $t_2$ may differ only in the $\beta_2$-nd component and thus $r_\beta=t_{1,\beta}=t_{2,\beta}=0$. However, since there is a path from $X_{\beta,t}$ to $X_{0,r}$, we also have $r_\beta>0$ by Lemma \ref{lemma:path-X0s}\ref{lemma:path-X0s:sbeta}, which yields a contradiction.

Hence there is a $\Tilde{t}\in U_\beta$ such that $X_{\beta,\Tilde{t}}$ lies on the path from $X_{\gamma,s}$ to $X_{0,r}$ and since $X_{\beta,\Tilde{t}}$ is not the start of the path, we need to have $\Tilde{t}_\beta=0$ by definition. Moreover, we also have $\tau_\beta(\Tilde{t})=\tau_\beta(r)$, which yields that $\Tilde{t}=\tau_\beta(t)$.

By Lemma \ref{lemma:path->divides}, it follows that $X_{\beta,\tau_\beta(t)}$ divides $X_{\gamma,s}$; furthermore, by Lemma \ref{lemma:divide larger}\ref{lemma:divide larger:beta} $X_{\beta,t}$ divides $X_{\beta,\tau_\beta(t)}$. Hence $X_{\beta,t}$ divides $X_{\gamma,s}$ and $\gcd(X_{\beta,t},X_{\gamma,s})=X_{\beta,t}$.

\ref{prop:gcd:betaLim} Let $r\in U_0$ be such that $X_{0,r}$ divides both $X_{\beta,t}$ and $X_{\gamma,s}$. By Lemma \ref{lemma:tree} there is a unique path from $X_{\gamma,s}$ to $X_{0,r}$. Let $\beta_2$ be the smallest limit ordinal with $\beta\le\beta_2$ and with the property that there is $\Tilde{t}\in U_{\beta_2}$ such that $X_{\beta_2,\Tilde{t}}$ lies on this path. Moreover, let $\beta_1$ be the unique successor ordinal such that $(X_{\beta_2,\Tilde{t}},X_{\beta_1,\tau_{\beta_2}(\Tilde{t})})$ is an edge on the path. Note that $\beta_1<\beta$, since otherwise the largest limit ordinal smaller than $\beta_1$ would be equal or bigger than $\beta$.

We distinguish three cases.

\textbf{Case 1:} $\beta_2=\beta$. In this case, $X_{\beta,{\Tilde{t}}}$ is not the start of the path and we have $\Tilde{t}_{\beta}=0$, or equivalently $\tau_{\beta}(\Tilde{t})=\Tilde{t}$, by definition of the edges in $\mathcal{T}.$ Since there is a path from $X_{\beta,t}$ to $X_{0,r}$ and a path from $X_{\beta,\Tilde{t}}$ to $X_{0,r}$, we obtain that \[\Tilde{t}=\tau_\beta(\Tilde{t})=\tau_\beta(r)=\tau_\beta(t).\] Then by Lemma \ref{lemma:divide larger}\ref{lemma:divide larger:beta}, $X_{\beta,t}$ divides $X_{\beta,\Tilde{t}}$ and consequently $X_{\gamma,s}$ as well.

\textbf{Case 2:} $\beta<\beta_2<\gamma$. Again, since $X_{\beta_2,\Tilde{t}}$ is not the start of the path, we have $\Tilde{t}_{\beta_2}=0$, or equivalently $\tau_{\beta_2}(\Tilde{t})=\Tilde{t}$. Since there is a path from $X_{\beta_1,\Tilde{t}}$ to $X_{0,r}$, we obtain $\tau_{\beta_1}(\Tilde{t})=\tau_{\beta_1}(r)$ and consequently  \[\Tilde{t}=\tau_\beta(\Tilde{t})=\tau_\beta(r)=\tau_\beta(t),\] since there is a path from $X_{\beta,t}$ to $X_{0,r}$ and we have $\beta_1<\beta<\beta_2$. Hence $\beta_2$ is a limit ordinal such that $X_{\beta_2,\tau_\beta(t)}$ lies on the path and $t_{\beta_2}=0$. By Lemma \ref{lemma:limit div}, $X_{\beta,t}$ then divides $X_{\beta_2,\tau_\beta(t)}$ and by Lemma \ref{lemma:path->divides}, it divides $X_{\gamma,s}$ as well.

\textbf{Case 3:} $\beta_2=\gamma.$ In this case, $\gamma$ is a limit ordinal and $(X_{\gamma,s},X_{\beta_1,\tau_\gamma(s)})$ is an edge on the path. By the definition of the edges, we have $s_\gamma<\beta_1<\beta$. Again, since there is a path from $X_{\beta_1,\tau_\gamma(s)}$ to $X_{0,r}$ and a path from $X_{\beta,t}$ to $X_{0,r}$, we obtain that $\tau_{\beta_1}(\tau_\gamma(s))=\tau_{\beta_1}(r)$, which implies that 
\begin{equation*}
\tau_\beta(\tau_\gamma(s))=\tau_\gamma(s)=\tau_\beta(r)=\tau_\beta(t).
\end{equation*}
Hence, $t_\delta=0$ if $\beta<\delta\leq\gamma$, and so
\begin{equation*}
t=\tau_\beta(t)+t_\beta e_\beta=\tau_\gamma(s)+t_\beta e_\beta.
\end{equation*}
If $s_\gamma\le t_\beta$, then, since $t_\beta<\beta$ and using Lemma \ref{lemma:limit div}, we have that $X_{\beta,t}$ divides $X_{\gamma,s}$. Suppose that $s_\gamma>t_\beta$. Since $s_\gamma<\beta$,  we have $\tau_\beta(t)+s_\gamma e_\beta\in U_\beta$ and similarly, $X_{\beta,\tau_\beta(t)+s_\gamma e_\beta}$ divides $X_{\gamma,s}$. By Lemma \ref{lemma:divide larger}\ref{lemma:divide larger:beta}, $X_{\beta,\tau_\beta(t)+s_\gamma e_\beta}$ also divides $X_{\beta,t}$, and thus the quotient
\begin{equation*}
Y:=X_{\beta,t}/X_{\beta,\tau_\beta(t)+s_\gamma e_\beta}=\prod_{t_\beta<\varepsilon\in\suc(s_\gamma+1)} X_{\varepsilon,\tau_\beta(t)}^2.
\end{equation*}
belongs to $F[\XX]$. To show that $X_{\beta,\tau_\beta(t)+s_\gamma e_\beta}$ is the greatest common divisor we are looking for, we need to show that $Y$ and $X_{\gamma,s}$ are coprime, and it is enough to show that they do not have a common divisor in  $\mathbf{X}_0$.

Suppose that $r'\in U_0$ is such that $X_{0,r'}$ divides $X_{\gamma,s}$. Then, there is a successor ordinal $\beta'>s_\gamma$ such that $(X_{\gamma,s},X_{\beta',\tau_\gamma(s)})$ lies on the unique path from $X_{\gamma,s}$ to $X_{0,r'}$. By Lemma \ref{lemma:path-X0s}\ref{lemma:path-X0s:sbeta}, it follows that $r'_{\beta'}>0$. If $X_{0,r'}$ also divides $Y$, then there is $\delta\in\suc(s_\gamma+1)$, such that $X_{0,r'}$ divides $X_{\delta,\tau_\beta(t)}$ and $t_\beta<\delta$. Then $\tau_\delta(\tau_\beta(t))=\tau_\delta(r')$. Since $\delta< \beta'$, we have $\tau_\beta(t)_{\beta'}=r'_{\beta'}>0$. This implies that $\beta'>\beta$, since $t\in U_\beta$. However, since we have $\tau_\beta(\tau_\gamma(s))=\tau_\beta(t)$ and $\beta<\beta'<\gamma$, we obtain that $t_{\beta'}=\tau_\beta(t)_{\beta'}=0$, which yields a contradiction. 
\end{proof}

We will now generalize this result for monomials. 

\begin{lemma}\label{lemma:gcd mult}
Let $D$ be an integral domain and $a,b,c\in D$ be nonzero. If $\gcd(a,c)$ exists and $\gcd(b,c)=1$ then $\gcd(ab,c)=\gcd(a,c)$.
\end{lemma}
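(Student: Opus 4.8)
The plan is to check directly that $d:=\gcd(a,c)$ has the defining property of $\gcd(ab,c)$: namely, that $d$ is a common divisor of $ab$ and $c$, and that every common divisor of $ab$ and $c$ divides $d$ (this automatically yields that $\gcd(ab,c)$ exists and equals $d$). The first half is immediate: from $d\mid a$ we get $d\mid ab$, and $d\mid c$ by the choice of $d$, so $d$ is indeed a common divisor of $ab$ and $c$.

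For the substantive half, I would take an arbitrary common divisor $e$ of $ab$ and $c$ and show $e\mid d$. The hypothesis on $b$ and $c$ I would use in the form of a relation $bx+cy=1$ with $x,y\in D$. Multiplying through by $a$ gives $a=(ab)x+c(ay)$; since $e\mid ab$ it divides $(ab)x$, and since $e\mid c$ it divides $c(ay)$, hence $e$ divides their sum, i.e.\ $e\mid a$. Thus $e$ is a common divisor of $a$ and $c$, and therefore $e\mid\gcd(a,c)=d$ by the universal property of $d$. Combining the two halves shows $\gcd(ab,c)=d=\gcd(a,c)$.

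The only step carrying any content is the passage from ``$e\mid ab$ and $e\mid c$'' to ``$e\mid a$'', and this is precisely where the coprimality of $b$ and $c$ is used; it is genuinely essential, since one must be able to peel the factor $a$ off from $b$ inside the product $ab$. If one wishes to invoke $\gcd(b,c)=1$ only through divisibility, the same conclusion can be obtained in a gcd‑domain by using the distributive law $\gcd(ab,cb)=b\,\gcd(a,c)=bd$ (so $e\mid bd$) together with $\gcd(ed,bd)=d\,\gcd(e,b)=d$ (so $e\mid d$); but the B\'ezout route is shorter and more transparent. Apart from this one point, the argument is pure formal manipulation of divisibilities, so I do not anticipate a real obstacle.
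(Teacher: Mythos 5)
The first half of your argument is fine, but the B\'ezout step is a genuine gap. In an arbitrary integral domain, $\gcd(b,c)=1$ only says that every common divisor of $b$ and $c$ is a unit; it does not provide a relation $bx+cy=1$ (already in $K[x,y]$ one has $\gcd(x,y)=1$ while $xK[x,y]+yK[x,y]\neq K[x,y]$). Worse, the implication you isolate as the crux --- ``$e\mid ab$ and $e\mid c$ imply $e\mid a$'' --- is a form of Euclid's lemma that genuinely fails without further hypotheses: in $D=K[u^2,u^3]$ take $a=b=u^3$ and $c=e=u^2$; then $\gcd(a,c)=\gcd(b,c)=1$, $e$ divides $ab=u^6$ and $c$, yet $e\nmid a$, and in fact $\gcd(ab,c)=u^2\neq 1=\gcd(a,c)$. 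So no purely formal manipulation of divisibilities can close this step; some existence assumption on gcds beyond the ones literally stated is unavoidable, and your expectation that the argument is ``pure formal manipulation'' is exactly where the difficulty hides.

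Your parenthetical fallback is, in substance, the paper's actual proof: any common divisor of $ab$ and $c$ divides both $ab$ and $ac$, hence divides $\gcd(ab,ac)=a\gcd(b,c)=a$, and therefore divides $\gcd(a,c)$. The engine here is the distributive law $\gcd(ma,mb)=m\gcd(a,b)$, which in a general domain is valid only in the direction ``if $\gcd(ma,mb)$ exists, then it equals $m\gcd(a,b)$''; the existence of $\gcd(ab,ac)$ is precisely what fails in the example above. So you should promote the fallback to the main argument, state explicitly that you are using the existence of $\gcd(ab,ac)$ (or of $\gcd(ab,cb)$ in your variant), and note that this is available in the setting where the lemma is applied, namely for monomials in $F[\mathbf{X}]$, where divisibility is controlled by the exponent decompositions of Lemma \ref{lemma:product}.
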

\begin{proof}
Clearly $\gcd(a,c)$ divides $\gcd(ab,c)$. Conversely, $\gcd(ab,c)$, divides $c$ and thus it divides $ac$; hence $\gcd(ab,c)|\gcd(ab,ac)=a\gcd(b,c)=a$. Therefore $\gcd(ab,c)|\gcd(a,c)$. The claim is proved.
\end{proof}

\begin{proposition}\label{prop:gcd monomial}
    Let $X=X_1\ldots X_n$ and $Y=Y_1\ldots Y_m$ be monomials with $X_1\ldots X_n,Y_1\ldots Y_m\in\mathbf{X}.$ Then $X$ and $Y$ have a greatest common divisor.
\end{proposition}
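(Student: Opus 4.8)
\emph{Overall strategy.} The plan is to reduce everything to Proposition~\ref{prop:gcd}, which settles the case of two single indeterminates, by an induction that strips off one factor at a time, using Lemma~\ref{lemma:gcd mult} as the ring-theoretic glue for the coprime case. First I would dispose of the degenerate situation in which $X$ and $Y$ have no common divisor in $\mathbf{X}_0$: then $\gcd(X,Y)=1$. Indeed, every non-unit monomial $Z_1\cdots Z_k$ is divisible by some element of $\mathbf{X}_0$ — if $Z_1\in\mathbf{X}_0$ this is clear, otherwise $Z_1$ admits a path to some $X_{0,r}\in\mathbf{X}_0$, whence $X_{0,r}$ divides $Z_1$ by Lemma~\ref{lemma:path->divides} — so a nontrivial common divisor of $X$ and $Y$ would force a common divisor in $\mathbf{X}_0$. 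Thus I may assume henceforth that $X$ and $Y$ do have a common divisor in $\mathbf{X}_0$.

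\emph{The induction.} I would then induct on $n+m$. The base cases $n,m\le 1$ are covered by Proposition~\ref{prop:gcd}, the primality of the $X_{0,r}$ (Lemma~\ref{lemma:X0s-prime}, used when a factor has height $0$), and the trivial cases where a factor is absent. For the inductive step, if some factor — say $X_n$ — is coprime to every $Y_j$, then repeated application of Lemma~\ref{lemma:gcd mult} gives first $\gcd(X_n,Y_1\cdots Y_m)=1$ and then $\gcd(X_1\cdots X_n,Y)=\gcd(X_1\cdots X_{n-1},Y)$, which exists by the inductive hypothesis; the symmetric case (some $Y_j$ coprime to all $X_i$) is identical. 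Otherwise, pick a factor of minimal height, say $X_1$ of height $\beta$, and a $Y_j$, say $Y_1$, with $\gcd(X_1,Y_1)\ne 1$; by minimality $\beta\le\mathrm{ht}(Y_1)$, and Proposition~\ref{prop:gcd} describes $g:=\gcd(X_1,Y_1)$ explicitly: it equals $X_1$ if $\beta$ is zero or a successor, and it equals some $X_{\beta,\tau_\beta(t)+\delta e_\beta}$ dividing $X_1$ (with quotient a product of indeterminates of height $<\beta$, by Lemma~\ref{lemma:divide larger}\ref{lemma:divide larger:beta}) if $\beta$ is a limit. In all cases $g$ divides both $X$ and $Y$ in $F[\mathbf{X}]$, and I would want to conclude $\gcd(X,Y)=g\cdot\gcd(X/g,\,Y/g)$ and apply the inductive hypothesis to the quotients.

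\emph{The main obstacle.} The identity $\gcd(X,Y)=g\cdot\gcd(X/g,Y/g)$ is \emph{not} valid in an arbitrary integral domain, and even granting it the quotients $X/g$, $Y/g$ need not have fewer factors than $X$, $Y$ — so a naive induction on $n+m$ does not close. To get around this I would pass to the overring $D$ introduced in the proof of Lemma~\ref{lemma:X0s-prime}, in which the monomials form the lattice-ordered monoid of functions $U_0\to\mathbb{N}_0$ and $\gcd$ is the pointwise minimum of the exponents $v_r$. Two facts should make this work: (i) for a monomial $Z$ of $F[\mathbf{X}]$ the exact power of the prime $X_{0,r}$ dividing $Z$ in $F[\mathbf{X}]$ equals $v_r(Z)$ (combining Lemma~\ref{lemma:path->divides} and Lemma~\ref{lemma:X0s-divides}), which forces $v_r(\gcd_{F[\mathbf{X}]}(a,b))=\min(v_r(a),v_r(b))$ whenever the left-hand gcd exists — that is, gcds of monomials in $F[\mathbf{X}]$, when they exist, are computed componentwise, exactly as in $D$; and (ii) a monomial $e$ with $v_r(e)\le v_r(Z)$ for every $r$ in fact already divides $Z$ in $F[\mathbf{X}]$, which again reduces to the primality of the $X_{0,r}$ together with Lemma~\ref{lemma:X0s-divides}.

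\emph{Closing the argument.} Granting (i) and (ii), the cancellation law $\gcd(ga,gb)=g\,\gcd(a,b)$ holds for monomials, which justifies the distributivity step above; moreover one can then replace the doomed induction on $n+m$ by the stronger statement that the explicit monomial $\gcd_D(X,Y)$ is realizable as a product of elements of $\mathbf{X}$ and divides both $X$ and $Y$ in $F[\mathbf{X}]$ — whence it is automatically the gcd in $F[\mathbf{X}]$, since by (ii) every common divisor is then seen to divide it. I expect the real work to be proving (i), (ii), and this realizability; Proposition~\ref{prop:gcd} is precisely the input that, fed into the componentwise picture, should yield the realizability step.
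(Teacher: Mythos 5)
Your setup matches the paper's: both arguments embed the monomials into the overring $D$ generated by infinite products of the $X_{0,r}$, note that a gcd, if it exists, must have exponent vector $(\min(n_r,m_r))_r$, and reduce the problem to producing a monomial $Z$ dividing both $X$ and $Y$ such that $X/Z$ and $Y/Z$ share no divisor in $\mathbf{X}_0$. You also correctly diagnose that an induction on $n+m$ cannot close, because the quotients by a partial gcd may have more factors than the monomials you started with. But the proposal stops exactly where the paper's proof begins in earnest: the ``realizability'' of the componentwise minimum as an element of $F[\mathbf{X}]$ dividing both monomials is deferred (``Proposition \ref{prop:gcd} is precisely the input that \dots should yield the realizability step'') rather than proved, and this is the substance of the proposition. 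The paper replaces the doomed induction on $n+m$ by an induction on the \emph{maximal height} $\beta$ occurring among the factors of $X$ (with an inner induction on $n$), first treating the case where $X=X_{\beta,t}$ is a single indeterminate: factors $Y_j$ coprime to $X$ are stripped off via Lemma \ref{lemma:gcd mult}; if some $Y_j$ has height $\geq\beta$, Proposition \ref{prop:gcd} identifies $\gcd(X,Y_j)$ as some $X_{\beta,\tau_\beta(t)+\delta e_\beta}$ and Lemma \ref{lemma:divide larger}\ref{lemma:divide larger:beta} rewrites the quotient as a product of lower-height indeterminates; and when every $Y_j$ has height $<\beta$, one chooses $k$ (successor case) or $\gamma$ (limit case) large enough that $X_{\beta,t}$ factors as $X_{\beta,t+ke_\beta}$ (resp.\ $X_{\beta,\tau_\beta(t)+\gamma e_\beta}$) times a finite product of lower-height indeterminates, the first factor being coprime to $Y$ by a separate argument using Lemma \ref{lemma:path-X0s}. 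None of this case analysis, which is where the combinatorics of the graph actually enters, appears in your proposal.

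A second, smaller issue: your claim (ii) --- that componentwise domination of exponent vectors implies divisibility in $F[\mathbf{X}]$ --- does not ``reduce to the primality of the $X_{0,r}$ together with Lemma \ref{lemma:X0s-divides}.'' Peeling off height-zero primes one at a time only handles divisors that are finite products of elements of $\mathbf{X}_0$; a divisor such as $X_{1,t}$ is an \emph{infinite} product of such primes, and proving that it divides $Z$ whenever its exponent vector is dominated by that of $Z$ is essentially as hard as the proposition itself (indeed (ii) is a consequence of the proposition, so using it as an input is circular). The non-circular route, implicit in the paper, is to prove the realizability statement for \emph{every} pair of monomials and then apply it to the pair $(W,Z)$ for an arbitrary common divisor $W$ of $X$ and $Y$: the resulting $Z'$ has the same exponent vector as $W$, so $W/Z'$ is a unit and $W$ divides $Z$. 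As written, your argument leaves both the realizability and (ii) unestablished.
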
 

\begin{proof} By Lemma \ref{lemma:product}, we can write  \[X=\prod_{r\in U_0}X_{0,r}^{n_r},\] and \[Y=\prod_{r\in U_0}X_{0,r}^{m_r},\] where $n_r,m_r\in \mathbb{N}_0$. In particular, $X$ (resp. $Y$) is divisible by $X_{0,r}$ with multiplicity $n_r$ (resp. $m_r$) for every $r\in U_0$. Suppose that the $\gcd$ of $X$ and $Y$ exists. It is then easy to see that \[\gcd(X,Y)=\prod_{r\in U_0}X_{0,r}^{\min(n_r,m_r)}.\] Hence it is enough to show that there is a monomial $Z$ such that $X/Z$ and $Y/Z$ do not share any common divisors in $\mathbf{X}_0$. \\

Let now $X_i=X_{\beta_i,t_i}$ for every $i$. We proceed by induction on the maximal height $\beta$ of the $X_i$'s.

If $\beta=0$, then $n_r\neq 0$ for only finitely many $r$ and we obtain \[\gcd(X,Y)=\prod_{r\in U_0}X_{0,r}^{\min(n_r,m_r)}.\]  

Suppose that the claim is true if the maximum is $\beta_i<\beta$ for every $i$.

Suppose first that $X=X_{\beta,t}\in\mathbf{X}_\beta$ is an indeterminate. Write $Y_j=X_{\delta_j,s_j}$ for some ordinal numbers $\delta_j$ and some $s_j\in U_{\delta_j}$. If there is a $j$ such that $\gcd(X,Y_j)=1$, then \[\gcd(X,Y)=\gcd(X,Y/Y_j)\] by Lemma \ref{lemma:gcd mult} and thus we can assume that $\gcd(X,Y_i)\neq 1$ for every $i$. If $\delta_j\ge\beta$ for some $j$, then by Proposition \ref{prop:gcd} we have $\gcd(X,Y_j)=X_{\beta,\tau_\beta(t)+\delta e_\beta}$, for some $\delta\ge t_\beta$. Then, by Lemma \ref{lemma:divide larger}\ref{lemma:divide larger:beta}, $X/X_{\beta,\tau_\beta(t)+\delta e_\beta}$ can be written as a (finite) product of indeterminates of height less than $\beta$. Hence the claim follows by induction. 

Suppose thus that $\delta_j<\beta$ for all $j$. Write
\begin{equation*}
Y=\gcd(X,Y_1)\cdots \gcd(X,Y_m) Z_1\ldots Z_m,
\end{equation*}
where $Z_i=Y_i/\gcd(X,Y_i)$. By Proposition \ref{prop:gcd} we have either $\gcd(X,Y_i)=Y_i$, in which case $Z_i=1$, or $\delta_i$ is a limit ordinal and $\gcd(X,Y_i)=X_{\delta_i,\tau_{\delta_i}(s_i)+\varepsilon e_{\delta_i}}$ for some ordinal $\varepsilon>(s_i)_{\delta_i}$; in the latter case, by  case 3 of the proof of Proposition \ref{prop:gcd}\ref{prop:gcd:betaLim} we have $\gcd(X,Z_i)=1$. Hence $\gcd(X,Z_i)=1$ in each case and $\gcd(X,Z_1\cdots Z_m)=1$, which means that
\begin{equation*}
\gcd(X,Y)=\gcd(X,\gcd(X,Y_1)\cdots \gcd(X,Y_m))
\end{equation*}
or, equivalently, we can suppose that $Y_i$ divides $X$ for each $i$.

Suppose now that $\beta=\gamma+1$ is a successor ordinal. Set $k_j:=(s_j)_\beta$, $k:=\underset{1 \leq j \leq m}{\max}k_j$  We claim that $\gcd(X_{\beta,t+ke_\beta},Y)=1$. Indeed, suppose that there is an $r\in U_0$ such that $X_{0,r}$ divides both $X_{\beta,t+ke_\beta}$ and some $Y_i=X_{\delta_i,s_i}$. 
By Lemma \ref{lemma:path-X0s}, we obtain $r_\beta>(t+ke_\beta)_\beta\ge k$ and $\tau_{\delta_i}(s_i)=\tau_{\delta_i}(r)$, which implies that \[r_\beta=(s_i)_\beta>k.\] However, this contradicts the choice of the integer $k$. Thus we have  $\gcd(X_{\beta,t+ke_\beta},Y)=1$. Therefore, using the definition of $X_{\beta,t}$ we have
\begin{equation*}
\begin{aligned}
\gcd(X,Y)=\gcd(X_{\beta,t},Y)&=\gcd(X_{\beta,t+ke_\beta}X_{\gamma,t+e_\beta}^2\cdots X_{\gamma,t+ke_\beta}^2,Y)=\\
&=\gcd(X_{\gamma,t+i_\beta}^2\cdots X_{\gamma,t+ke_\beta}^2,Y)
\end{aligned}
\end{equation*}
and the claim follows by induction.

Suppose now that $\beta$ is a limit ordinal. Set \[\gamma_j:=\max\{\delta\in\suc(\beta):(s_j)_\delta\neq 0\}\] and \[\gamma:=\max\{\delta_1,\ldots,\delta_m,\gamma_1,\ldots,\gamma_m,t_\beta\}.\] Note that these maxima exist, since every $s_j$ has only finitely many nonzero components. Moreover, we have $\gamma<\beta$ and Lemma \ref{lemma:divide larger}\ref{lemma:divide larger:beta} implies that we can write \[X_{\beta,t}=X_{\beta,\tau_\beta(t)+\gamma e_\beta}\cdot \Tilde{X},\] where $\Tilde{X}$ can be written as a product of indeterminates of height less than $\beta$. By induction hypothesis, it is thus enough to show that $\gcd(X_{\beta,\tau_\beta(t)+\gamma e_\beta},Y)=1$. Suppose that there is $r\in U_0$ such that $X_{0,t}$ divides both $X_{\beta,\tau_\beta(t)+\gamma e_\beta}$ and $Y_i$ for some $i$. By Lemma 0.7, there is $\varepsilon\in\suc(\beta)$ with $\gamma<\varepsilon$ such that $X_{0,r}$ divides $X_{\varepsilon,\tau_\beta(t)}$. Then Lemma \ref{lemma:path-X0s} yields $r_\varepsilon>0$ and $\tau_{\delta_i}(s_i)=\tau_{\delta_i}(r)$ and consequently $r_\varepsilon=(s_i)_\varepsilon>0$, since $\delta_i<\varepsilon$. However, by definition of $\gamma_i$, this implies that $\varepsilon\le\gamma_i\le\gamma$, which provides the final contradiction.

Hence $\gcd(X,Y)$ exists if $X=X_{\beta,t}\in\mathbf{X}_\beta$. Suppose now that $X=X_1\cdots X_n$ with $X_i=X_{\beta_i,t_i}$ and $\beta_i\leq\beta$ for each $i$; we proceed by induction on $n$. The case $n=1$ is the previous one. Suppose $n>1$. Then, $Z:=\gcd(X_n,Y)$ exists, and $\gcd(X_n/Z,Y/Z)=1$; hence,
\begin{equation*}
\begin{aligned}
\gcd(X_1\cdots X_n,Y) &=\gcd\left(X_1\cdots X_{n-1}\frac{X_n}{Z}Z,\frac{Y}{Z}Z\right)=\\
&=Z\cdot\gcd\left(X_1\cdots X_{n-1}\frac{X_n}{Z},\frac{Y}{Z}\right)=\\
&=Z\cdot\gcd\left(X_1\cdots X_{n-1},\frac{Y}{Z}\right).
\end{aligned}
\end{equation*}
Since $Y/Z$ is a monomial, $\gcd\left(X_1\cdots X_{n-1},\frac{Y}{Z}\right)$ exists by induction and so does $\gcd(X_1\cdots X_n,Y)$. This concludes the proof that $\gcd(X,Y)$ exists when the maximum of the $\beta_i$ is $\beta$. By induction on $\beta$, $\gcd(X,Y)$ exists for every monomial $X$.
\end{proof} 

\section{The example}\label{sect:example}
We are now ready to define the domain that will be our example.

Let $S$ be the set of all elements of $F[\mathbf{X}]$ which are not divisible by a proper monomial. By Lemma \ref{lemma:X0s-prime}\ref{lemma:X0s-prime:prime}, every element of $\mathbf{X}_0$ is a prime element and thus $S$ is a multiplicative subset of $F[\mathbf{X}]$. We set \[D:=S^{-1}F[\mathbf{X}].\]

\begin{lemma}
Let $D$ be as above. Then, $D$ is a GCD-domain and every element of $D$ is associated to a monomial.
\end{lemma}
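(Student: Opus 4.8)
The plan is to prove the two assertions in the natural order: first that every element of $D$ is associated to a monomial, then that $D$ is a GCD-domain. For the first part, recall that $F[\mathbf{X}]$ is an $F$-vector space with the monomials as a basis, so an arbitrary nonzero $f\in F[\mathbf{X}]$ can be written $f=a_1Z_1+\cdots+a_k Z_k$ with $a_i\in F^\times$ and $Z_i$ distinct monomials. By Lemma \ref{lemma:product} each $Z_i$ rewrites as $\prod_{r\in U_0}X_{0,r}^{n_r^{(i)}}$, and since the $X_{0,r}$ are prime (Lemma \ref{lemma:X0s-prime}\ref{lemma:X0s-prime:prime}) with pairwise non-associated, one can form $m_r:=\min_i n_r^{(i)}$ and factor out $W:=\prod_{r}X_{0,r}^{m_r}$, which is a monomial in $F[\mathbf{X}]$ (only finitely many $m_r$ are nonzero, as each $Z_i$ involves finitely many $X_{0,r}$). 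Then $f=W\cdot g$ where $g=a_1(Z_1/W)+\cdots+a_k(Z_k/W)\in F[\mathbf{X}]$ is not divisible by any $X_{0,r}$, hence (again by primeness of the $X_{0,r}$, which are the only primes that can divide a proper monomial, and by the fact that every proper monomial is divisible by some $X_{0,r}$ via Lemma \ref{lemma:product}) $g$ is divisible by no proper monomial, i.e.\ $g\in S$. Therefore in $D=S^{-1}F[\mathbf{X}]$ the element $f$ is associated to the monomial $W$.

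For the GCD statement, since associates have the same divisibility behaviour it suffices to check that any two monomials $X,Y$ (viewed in $D$) have a gcd in $D$; but Proposition \ref{prop:gcd monomial} already gives a gcd $G$ of $X$ and $Y$ in $F[\mathbf{X}]$, and the point is to transfer this to $D$. The key observation is that a common divisor in $D$ of two monomials may be taken to be a monomial: if $h\in D$ divides $X$, then $sh\in F[\mathbf{X}]$ for some $s\in S$ and $sh$ divides $sX$; writing $sh$ in the basis and applying the first part, $sh$ is associated in $D$ to a monomial, hence so is $h$, and that monomial still divides $X$ in $F[\mathbf{X}]$ because every element of $S$ is a unit of $D$ and divides no proper monomial (so the monomial part of a divisor of a monomial is again a divisor in $F[\mathbf{X}]$). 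Thus the common divisors of $X,Y$ in $D$ are, up to associates, exactly the monomials dividing both $X$ and $Y$ in $F[\mathbf{X}]$ — and among these $G$ is the largest by Proposition \ref{prop:gcd monomial}. Hence $G=\gcd_D(X,Y)$.

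Finally, to conclude that $D$ is a GCD-domain for arbitrary elements: given nonzero $f_1,f_2\in D$, by the first part $f_i$ is associated to a monomial $X^{(i)}$, so $\gcd_D(f_1,f_2)$ exists and equals $\gcd_D(X^{(1)},X^{(2)})$, which exists by the preceding paragraph. The main obstacle I anticipate is the bookkeeping in the transfer between $F[\mathbf{X}]$ and its localization $D$ — specifically, making precise that ``dividing a monomial in $D$'' is controlled by ``dividing the corresponding monomial in $F[\mathbf{X}]$'', which rests on the facts that $S$ consists exactly of the non-units-free elements (elements divisible by no proper monomial) and that, by Lemma \ref{lemma:product}, divisibility of a monomial is detected by the exponents of the primes $X_{0,r}$. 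Once that dictionary is set up cleanly, both assertions follow formally from Proposition \ref{prop:gcd monomial} and Lemma \ref{lemma:X0s-prime}.
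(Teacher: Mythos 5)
Your overall strategy is the same as the paper's: factor an element of $F[\mathbf{X}]$ as (monomial)$\times$(element of $S$), then reduce the GCD property to Proposition \ref{prop:gcd monomial}. But there is a genuine error in your justification of the first step. You claim that $W:=\prod_r X_{0,r}^{m_r}$, with $m_r=\min_i n_r^{(i)}$, is a monomial of $F[\mathbf{X}]$ ``because only finitely many $m_r$ are nonzero, as each $Z_i$ involves finitely many $X_{0,r}$.'' This is false: a monomial is a finite product of elements of $\mathbf{X}$, but by Lemma \ref{lemma:product} any factor $X_{\beta,t}$ of positive height expands as an \emph{infinite} product of the $X_{0,r}$, so a single monomial generally has infinitely many nonzero exponents $n_r$. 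For instance, if $Z_1=X_{1,t}$ and $Z_2=X_{1,t}X_{0,s}$, then $m_r\neq 0$ for all of the infinitely many $r$ with a path from $X_{1,t}$ to $X_{0,r}$. Consequently it is not at all obvious that the exponent vector $(m_r)_r$ is realized by an element of $F[\mathbf{X}]$, let alone by a finite product of elements of $\mathbf{X}$: that is exactly the content of Proposition \ref{prop:gcd monomial}, and it is the whole reason that proposition is nontrivial. The fix is simply to set $W:=\gcd(Z_1,\ldots,Z_k)$ and cite Proposition \ref{prop:gcd monomial} for the fact that this gcd exists and is a monomial with exponent vector $(m_r)_r$ — which is what the paper does.

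With that substitution, the remainder of your argument is sound and matches the paper's: since for each $r$ some $Z_i/W$ has $X_{0,r}$-exponent zero, no $X_{0,r}$ (hence no proper monomial) divides $f/W$, so $f/W\in S$ is a unit of $D$; and your more careful discussion of why gcd's of monomials transfer from $F[\mathbf{X}]$ to the localization $D$ (common divisors in $D$ of a monomial are, up to units, monomials dividing it in $F[\mathbf{X}]$) is a legitimate elaboration of a point the paper passes over quickly.
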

\begin{proof}
Let $x\in D$: then, there is an $s\in S$ such that $sx\in F[\XX]$. Thus, we can write $sx=a_1Y_1+\cdots+a_nY_n$ where $a_i\in F$ and each $Y_i$ is a monomial. By Proposition \ref{prop:gcd monomial}, $Z:=\gcd(Y_1,\ldots,Y_n)$ exists and is a monomial. Then,
\begin{equation*}
sx=\sum_{i=1}^{n}a_iY_i=Z\sum_{i=1}^{n}a_i(Y_i/Z);
\end{equation*}
by construction, $\gcd(Y_1/Z,\ldots,Y_n/Z)=1$, and thus the sum of the rightmost hand side belongs to $S$, i.e., it is a unit of $D$. Therefore, $x$ is associated in $D$ to the monomial $Z$.

If $x,y\in D$ are monomials, then $\gcd(x,y)$ exists in $D$ since it exists in $F[\XX]$; by the previous part of the proof, it follows that $\gcd(x,y)$ exists for every pair $x,y$ of elements of $D$. Hence $D$ is a GCD-domain.
\end{proof}

We now want to find the prime ideals of $D$, and to do so, we explicitly define the following ideals indexed by ordinal numbers $\beta<\alpha$ and $t\in U_\beta$.
\begin{itemize}
\item For each $t\in U_0$, we define \[M_{0,t}:=X_{0,t}D.\]
\item If $\beta\in\suc(\alpha)$ and $t\in U_\beta$, we set \[M_{\beta,t}:=\sum_{i=0}^{\infty}X_{\beta,\tau_\beta(t)+i e_\beta}D.\]
\item If $\beta\in\li(\alpha)$ and $t\in U_\beta$, we set \[M_{\beta,t}:=\sum_{\gamma<\beta}X_{\beta,\tau_\beta(t)+\gamma e_\beta}D.\]
\end{itemize}

For a fixed ordinal $\beta$, we denote the collection of all ideals $M_{\beta,t}$, where $t\in U_\beta$, by $\mathcal{M}_\beta$ and we write \[\mathcal{M}:=\bigcup_{\beta<\alpha}\mathcal{M}_\beta.\] 

\begin{remark}
~\begin{enumerate}
\item By Lemma \ref{lemma:divide larger}\ref{lemma:divide larger:beta}, every element of $M_{\beta,t}$ is divisible by some $X_{\beta,s}$, where $\tau_\beta(s)=\tau_\beta(t)$. In other words, every finitely generated ideal $I\subseteq M_{\beta,t}$ is contained in $X_{\beta,s}D$ for some $s\in U_\beta$ with $\tau_\beta(s)=\tau_\beta(t)$.
\item It is possible for $M_{\beta,t}$ to be equal to $M_{\beta,t'}$ even if $t\neq t'$. Indeed, $M_{\beta,t}=M_{\beta,t+e_\beta}$ for every $\beta$. More precisely, by Lemma \ref{lemma:divide larger}, we have $M_{\beta,t}=M_{\gamma,s}$ if and only if $\beta=\gamma$ and $\tau_\beta(t)=\tau_\beta(s)$. 
\end{enumerate}
\end{remark}

\begin{proposition}\label{prop:max ideals}
    We have $\Spec(D)=\mathcal{M}\cup\{0\}$.
\end{proposition}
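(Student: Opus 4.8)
The plan is to show two containments: first that every $M_{\beta,t}$ is a prime ideal (so $\mathcal M\cup\{0\}\subseteq\Spec(D)$), and then that every nonzero prime contains, hence equals, some $M_{\beta,t}$. For primality, I would use that $D$ is a GCD-domain in which every element is associated to a monomial, so a proper prime ideal $P$ is determined by the monomials it contains, and it suffices to check that if a monomial $Y=Y_1\cdots Y_n$ lies in $M_{\beta,t}$ then some $Y_j$ does. By the first item of the Remark preceding the proposition, every element of $M_{\beta,t}$ is divisible by some $X_{\beta,s}$ with $\tau_\beta(s)=\tau_\beta(t)$; so $X_{\beta,s}$ divides the monomial $Y=Y_1\cdots Y_n$. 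Here I would invoke Lemma~\ref{lemma:X0s-prime}\ref{lemma:X0s-prime:infprod}-style reasoning together with Lemma~\ref{lemma:X0s-divides}: pick $r\in U_0$ with $X_{0,r}\mid X_{\beta,s}$; then $X_{0,r}$ divides the monomial $Y$, hence divides some $Y_j=X_{\delta_j,u_j}$, so there is a path from $X_{\delta_j,u_j}$ to $X_{0,r}$, and combining with $\tau_\beta(r)=\tau_\beta(s)=\tau_\beta(t)$ through Lemmas~\ref{lemma:path-X0s}, \ref{lemma:divide larger} and \ref{lemma:limit div} (splitting into the cases $\delta_j<\beta$, $\delta_j=\beta$, $\delta_j>\beta$ exactly as in Proposition~\ref{prop:gcd}), one gets that $X_{\delta_j,u_j}\in M_{\beta,t}$, i.e. $Y_j\in M_{\beta,t}$. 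This shows $M_{\beta,t}$ is prime; it is proper because the generators are proper monomials, none of which is a unit of $D$.

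For the reverse inclusion, let $P$ be a nonzero prime of $D$ and pick $0\neq x\in P$; since $x$ is associated to a monomial $Y=Y_1\cdots Y_n\in\mathbf X$, primality gives some $Y_i=X_{\beta,t}\in P$. Now I would run an induction on the height $\beta$ to produce an element of $\mathcal M$ contained in $P$. If $\beta=0$ then $M_{0,t}=X_{0,t}D\subseteq P$ and we are done. If $\beta=\gamma+1$, then $X_{\beta,t}=X_{\beta,t+e_\beta}X_{\gamma,t+e_\beta}^2$ (the relation from the Remark in Section~4), so $P$ contains either $X_{\beta,t+e_\beta}$ or $X_{\gamma,t+e_\beta}$; iterating, either $P$ contains $X_{\beta,\tau_\beta(t)+ke_\beta}$ for all $k$ — giving $M_{\beta,t}\subseteq P$ — or at some stage $P$ contains an indeterminate of height $\le\gamma$, and we finish by induction. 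The limit case is analogous using the factorization $X_{\beta,t}=X_{t_\beta+1,\tau_\beta(t)}^2\cdot X_{\beta,t+e_\beta}$ and, for smaller limit ordinals $\delta$, $X_{\beta,t}=X_{\delta,\tau_\beta(t)+t_\beta e_\delta}\cdot X_{\beta,\tau_\beta(t)+\delta e_\beta}$: either $P$ absorbs all $X_{\beta,\tau_\beta(t)+\gamma e_\beta}$ for $\gamma<\beta$, yielding $M_{\beta,t}\subseteq P$, or $P$ catches an indeterminate of strictly smaller height and induction applies. Finally, since $M_{\beta,t}$ is a maximal element of $\mathcal M$ under inclusion (two ideals in $\mathcal M$ are comparable only when one contains the other, and by Lemma~\ref{lemma:divide larger} and the GCD property the $M_{\beta,t}$ are pairwise incomparable across distinct $\tau_\beta$-classes), and $D$ is one-dimensional being a localization of... — more carefully, once $M_{\beta,t}\subseteq P$ I claim $M_{\beta,t}=P$: an element $y\in P$ is associated to a monomial, and the argument of the first paragraph applied to $y$ together with the absorption just proved forces $y\in M_{\beta,t}$.

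The main obstacle I expect is the first paragraph: verifying that a monomial lying in $M_{\beta,t}$ must have one of its factors in $M_{\beta,t}$. The subtlety is that $M_{\beta,t}$ is an infinitely generated ideal, so "membership" must be unwound through the infinite-product description of the $X_{\beta,s}$ and the path combinatorics of $\mathcal T$; essentially one is re-deriving, in this slightly different guise, the case analysis of Proposition~\ref{prop:gcd} (successor vs.\ limit $\beta$, and $\delta_j$ below/at/above $\beta$), and care is needed to ensure that when $X_{0,r}\mid Y_j$ with $\tau_\beta(r)=\tau_\beta(t)$ one really lands inside $M_{\beta,t}$ and not merely in some $X_{\beta,s}D$ with $\tau_\beta(s)\neq\tau_\beta(t)$. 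A secondary point requiring attention is confirming $\dim D=1$, i.e. that each $M_{\beta,t}$ is actually maximal and that there are no primes strictly between $0$ and $M_{\beta,t}$; this follows from the height-induction showing every nonzero prime contains some $M_{\beta,t}$, combined with the incomparability of distinct members of $\mathcal M$, so no extra work beyond the two inclusions is really needed.
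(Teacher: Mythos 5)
Your architecture (prove each $M_{\beta,t}$ is \emph{prime}, then show every nonzero prime contains some $M_{\beta,t}$, then conclude equality) differs from the paper's, which proves each $M_{\beta,t}$ is \emph{maximal} first; and the difference is exactly where your main gap sits. Knowing that $M_{\beta,t}$ is prime and that $M_{\beta,t}\subseteq P$ does not give $P=M_{\beta,t}$, and your closing justification does not fill this in: the ``argument of the first paragraph'' tells you that a monomial lying \emph{in $M_{\beta,t}$} has a factor in $M_{\beta,t}$, whereas for $y\in P$ you only know $y\in P$; primality of $P$ hands you an indeterminate factor $X_{\gamma,s}\in P$, and nothing you have proved forces $X_{\gamma,s}\in M_{\beta,t}$. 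The paper closes precisely this point with a device your proposal never invokes: if $X_{\gamma,s}$ and a generator $X_{\beta,u}$ both lie in the proper ideal, their sum is a non-unit of $D$, hence (by the definition of $S$) divisible by a proper monomial, hence $X_{\gamma,s}$ and $X_{\beta,u}$ share a common divisor in $\mathbf{X}_0$; Proposition~\ref{prop:gcd} then either places $X_{\gamma,s}$ in $M_{\beta,t}$ or, when $\gamma<\beta$, yields a contradiction by exhibiting a generator coprime to $X_{\gamma,s}$. Without this step (or some other proof that no prime properly contains $M_{\beta,t}$) the inclusion $\Spec(D)\subseteq\mathcal{M}\cup\{0\}$ is not established.

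There is also a gap inside the primality argument itself. You choose one $r\in U_0$ with $X_{0,r}\mid X_{\beta,s}$ and deduce that $X_{0,r}$ divides some factor $Y_j$ of the monomial; but that particular $Y_j$ may have height strictly less than $\beta$, and no element of height $<\beta$ can lie in $M_{\beta,t}$, since every element of $M_{\beta,t}$ is divisible by some $X_{\beta,s'}$ and Lemma~\ref{lemma:divide larger}\ref{lemma:divide larger:gamma} forbids this. So the real subtlety is not landing in the wrong $\tau_\beta$-class, as you suggest, but landing on a factor of too small a height. A repair must exploit that $X_{\beta,s}$ has infinitely many prime divisors in $\mathbf{X}_0$ while the factors of height $<\beta$ can only absorb finitely many branches of the tree below $X_{\beta,s}$ --- for instance, via Lemma~\ref{lemma:gcd mult} and Proposition~\ref{prop:gcd}: if every $Y_j$ of height $\geq\beta$ were coprime to $X_{\beta,s}$, then $X_{\beta,s}=\gcd(X_{\beta,s},Y)$ would have to divide the product of the low-height factors, which must then be ruled out by a counting/disjointness argument on the sets $R_0$. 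By contrast, your second paragraph (every nonzero prime contains some $M_{\beta,t}$) is sound and essentially reproduces the paper's argument, with your transfinite descent replacing the paper's choice of a minimal height $\beta$ with $P\cap\mathbf{X}_\beta\neq\emptyset$.
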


\begin{proof}
We first show that every element of $\mathcal{M}$ is a maximal ideal. Take $\beta<\alpha$ and $t\in U_\beta$, and suppose that there is a maximal ideal $M$ such that $M_{\beta,t}\subsetneq M$. Let $X\in M\setminus M_{\beta,t}$. Since every element of $D$ is associated to a monomial and $M$ is prime, we can assume without loss of generality that $X\in\mathbf{X}$, say $X=X_{\gamma,s}$, where $s\in U_\gamma$. Then $X_{\beta,t}+X_{\gamma,s}\in M$; in particular, $X_{\beta,t}+X_{\gamma,s}\notin S$, and thus there must be a monomial dividing $X_{\beta,t}+X_{\gamma,s}$. Therefore, $X_{\beta,t}$ and $X_{\gamma,s}$ share a common divisor in $\mathbf{X}_0$.

We distinguish several cases.

If $\beta=0$, then it follows that $X_{\beta,t}$ divides $X_{\gamma,s}$ and so $X_{\gamma,s}\in M_{\beta,t}$, a contradiction.

If $0<\beta<\gamma$, then by Proposition \ref{prop:gcd}, there is a $\delta$ such that $\gcd(X_{\beta,t},X_{\gamma,s})=X_{\beta,\tau_\beta(t)+\delta e_\beta}$; since the latter is one of the generators of $M_{\beta,t}$ it follows that $X_{\gamma,s}\in M_{\beta,t}$, again a contradiction.

If $\beta=\gamma$, then by Proposition \ref{prop:gcd}\ref{prop:gcd:beta=gamma}, $X_{\beta,t}$ divides $X_{\gamma,s}$ or $X_{\gamma,s}$ divides $X_{\beta,t}$. By Lemma \ref{lemma:divide larger}\ref{lemma:divide larger:beta}, we obtain that $\tau_\beta(t)=\tau_\beta(s)$ and hence $X_{\gamma,s}\in M_{\beta,t}$, another contradiction. 

Suppose now that $\gamma<\beta$. We distinguish two subcases.

We first consider the case in which $\beta$ is a successor ordinal. If $X_{0,r}$ divides $X_{\gamma,s}$, then by Lemma \ref{lemma:path-X0s}\ref{lemma:path-X0s:taubeta}, we have $\tau_\gamma(s)=\tau_\gamma(r)$, and in particular $s_\beta=r_\beta$. By Lemma \ref{lemma:path-X0s}\ref{lemma:path-X0s:sbeta}, $X_{0,r}$ cannot divide $X_{\beta,\tau_\beta(t)+s_\beta e_\beta}$, and thus $\gcd(X_{\gamma,s},X_{\beta,\tau_\beta(t)+s_\beta e_\beta})=1$. However, since $X_{\beta,\tau_\beta(t)+s_\beta e_\beta}$ is one of the generators of $M_{\beta,t}$, by the same reasoning as above, the greatest common divisor shouldn't be $1$; hence we get a contradiction.

Suppose now that $\beta$ is a limit ordinal. Since $s$ has finitely many nonzero components, there is a $\delta$ such that $\gamma<\delta<\beta$ and $s_\varepsilon=0$ for every $\delta<\varepsilon<\beta$. The element $X_{\beta,\tau_\beta(t)+\delta e_\beta}$ is one of the generators of $M_{\beta,t}$; by the above reasoning, there is an $r\in U_0$ such that $X_{0,r}$ divides both $X_{\gamma,s}$ and $X_{\beta,\tau_\beta(t)+\delta e_\beta}$. Then $\tau_\gamma(s)=\tau_\gamma(r)$ by Lemma \ref{lemma:path-X0s}\ref{lemma:path-X0s:taubeta} and thus $r_\varepsilon=0$ for every $\delta<\varepsilon<\beta$.

By Lemma \ref{lemma:X0s-divides}, there is a path from $X_{\beta,\tau_\beta(t)+\delta e_\beta}$ to $X_{0,r}$. Since
\[\tau_\beta(\tau_\beta(t)+\delta e_\beta)=\tau_\beta(t),\]
there is a successor ordinal $\varepsilon>\delta$, such that the path passes through $X_{\varepsilon,\tau_\beta(t)}$. However, by Lemma \ref{lemma:path-X0s}\ref{lemma:path-X0s:taubeta} this implies that $r_\varepsilon>0$, which contradicts the choice of $\delta$. Hence \[\gcd(X_{\gamma,s},X_{\beta,\tau_\beta(t)+\delta e_\beta})=1\] and as before, this contradicts our initial assumption.
Therefore, each $M_{\beta,t}\in\mathcal{M}$ is a maximal ideal.

\bigskip

Let now $P$ be a nonzero prime ideal of $D$. Then, $P$ must contain a monomial, and since $P$ is prime we obtain $P\cap \mathbf{X}\neq\emptyset$. Let $\beta$ be the least ordinal such that $P\cap\mathbf{X}_\beta\neq\emptyset$ and let $t\in U_\beta$ such that $X_{\beta,t}\in P$. 

If $\beta=0$, then $M_{0,t}\subseteq P$ and so $M_{0,t}=P$. If $\beta>0$, we claim that $P$ must contain $X_{\beta,t+\delta e_\beta}$ for all $\delta$. Indeed, $X_{\beta,t}=Y\cdot X_{\beta,t+\delta e_\beta}$ for some monomial $Y$ that is a product of elements in $\bigcup_{\gamma<\beta}\mathbf{X}_\gamma$ (see the proof of Lemma \ref{lemma:divide larger}\ref{lemma:divide larger:beta}); by the definition of $\beta$, we have $Y\notin P$ and thus $X_{\beta,t+\delta e_\beta}\in P$. Therefore, $P$ contains the union $\bigcup_\delta X_{\beta,t+\delta e_\beta}D$, which is simply $M_{\beta,t}$. By the previous part of the proof, $M_{\beta,t}$ is a maximal ideal, and thus $M_{\beta,t}=P$. Hence all nonzero prime ideals of $D$ are in $\mathcal{M}$, and $\Max(D)=\mathcal{M}$.
\end{proof}

\begin{theorem}\label{theorem:sp-rank alpha}
$D$ is an almost Dedekind domain of SP-rank $\alpha$.
\end{theorem}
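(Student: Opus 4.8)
The plan is to prove the theorem in three parts: first that $D$ is almost Dedekind, then to compute the SP-derived sequence $\{\Crit^\beta(D)\}_\beta$ explicitly in terms of the combinatorial data, and finally to read off the SP-rank.

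First I would show that $D$ is an almost Dedekind domain. Since $D$ is a GCD-domain (hence a Pr\"ufer domain once we know it is locally a valuation domain), and $\Max(D)=\mathcal{M}$ by Proposition \ref{prop:max ideals}, it suffices to check that $D_M$ is a rank-one discrete valuation ring for each $M=M_{\beta,t}$. For $\beta=0$, $M_{0,t}=X_{0,t}D$ is principal and generated by a prime element, so $D_{M_{0,t}}$ is a DVR. For $\beta>0$, the key is the remark following the definition of $\mathcal{M}$: every finitely generated ideal contained in $M_{\beta,t}$ lies in some principal ideal $X_{\beta,s}D$ with $\tau_\beta(s)=\tau_\beta(t)$, and by Lemma \ref{lemma:divide larger}\ref{lemma:divide larger:beta} the generators $X_{\beta,\tau_\beta(t)+ie_\beta}$ form a chain under divisibility (each divides the previous one, with quotient a product of lower-height indeterminates that are units in $D_{M_{\beta,t}}$ — one must check these quotients are not in $M_{\beta,t}$, which follows from the $\gcd$ computations already used in Proposition \ref{prop:max ideals}). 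Hence in $D_{M_{\beta,t}}$ all these generators are associates and the maximal ideal is principal; one then checks $\bigcap_n M_{\beta,t}^nD_{M_{\beta,t}}=0$ using that every nonzero element of $D$ is a monomial and each $X_{0,r}$ has finite multiplicity, so $D_{M_{\beta,t}}$ is a DVR. Thus $D$ is almost Dedekind.

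The main part is to identify $\Crit^\beta(D)$. I claim that for each ordinal $\beta<\alpha$,
\begin{equation*}
\Crit^\beta(D)=\mathcal{M}_{\geq\beta}:=\bigcup_{\beta\le\gamma<\alpha}\mathcal{M}_\gamma,
\end{equation*}
and that the overring $T_\beta$ is the localization of $D$ inverting all $X_{0,r}$ that do not divide any indeterminate of height $\ge\beta$ (equivalently, $T_\beta$ has maximal ideals exactly the extensions of $\mathcal{M}_{\geq\beta}$). This is proved by transfinite induction on $\beta$. The base case $\beta=0$ is Proposition \ref{prop:max ideals}. For the limit case, $\Crit^\beta=\bigcap_{\gamma<\beta}\Crit^\gamma=\bigcap_{\gamma<\beta}\mathcal{M}_{\geq\gamma}$; a maximal ideal $M_{\delta,t}$ lies in this intersection iff $\delta\ge\gamma$ for all $\gamma<\beta$, i.e., iff $\delta\ge\beta$ — so $\Crit^\beta=\mathcal{M}_{\geq\beta}$, using here crucially that heights are genuine ordinals so that $\sup_{\gamma<\beta}$ of "$\delta\ge\gamma$" is "$\delta\ge\beta$". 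For the successor case $\beta=\gamma+1$: working in $T_\gamma$, whose maximal ideals are the extensions of $\mathcal{M}_{\geq\gamma}$, I must determine which $M_{\delta,t}$ with $\delta\ge\gamma$ become critical, i.e., contain no finitely generated radical ideal of $T_\gamma$. For $\delta=\gamma$: if $\gamma$ is a successor, the generator $X_{\gamma,\tau_\gamma(t)+ie_\gamma}$ factors (via the Remark after $F[\mathbf{X}]$ is introduced) as $X_{\gamma-1,\ldots}^2\cdot X_{\gamma,\ldots}$, and the square factor $X_{\gamma-1,\cdot}^2$ is, in $T_\gamma$, a unit times the square of a non-unit only if $X_{\gamma-1,\cdot}$ survives in $T_\gamma$ — but $\mathcal{M}_{\gamma-1}$ is not in $\Crit^\gamma$, so that indeterminate has become a unit, contradiction; the right way is: every finitely generated $I\subseteq M_{\gamma,t}T_\gamma$ lies in $X_{\gamma,s}T_\gamma$, and $X_{\gamma,s}=X_{\gamma-1,s+e_\gamma}^2\cdot X_{\gamma,s+e_\gamma}$ shows $X_{\gamma,s}$ lies in the square of the maximal ideal $M_{\gamma,s}T_\gamma$ precisely when $X_{\gamma-1,s+e_\gamma}$ is a non-unit of $T_\gamma$, which fails — so instead one shows $X_{\gamma-1,s+e_\gamma}^2$ contributes nothing and $X_{\gamma,s}$ generates a radical ideal in $T_\gamma$, so $\mathcal{M}_\gamma\cap\Crit^\beta=\emptyset$. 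Meanwhile for $\delta>\gamma$, the defining infinite products show every generator of $M_{\delta,t}T_\gamma$ is divisible by the square of an element of height $\ge\gamma$ which is still a non-unit in $T_\gamma$ (since $\mathcal{M}_{\geq\gamma}\subseteq\Crit^\gamma$), hence lies in the square of a maximal ideal of $T_\gamma$; thus $M_{\delta,t}T_\gamma\in\Crit(T_\gamma)$ and $M_{\delta,t}\in\Crit^\beta(D)$. This gives $\Crit^{\gamma+1}(D)=\mathcal{M}_{>\gamma}=\mathcal{M}_{\geq\gamma+1}$, closing the induction.

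Finally, the SP-rank is the least $\beta$ with $\Crit^\beta(D)=\emptyset$. Since $\Crit^\beta(D)=\mathcal{M}_{\geq\beta}$ for $\beta<\alpha$ and each $\mathcal{M}_\gamma$ is nonempty (e.g. $X_{\gamma,0}$ exists for every $\gamma<\alpha$, as $0\in U_\gamma$ — here we use that for limit $\gamma$ we need $0=t_\gamma<\gamma$, which holds since $\gamma>0$), we get $\Crit^\beta(D)\ne\emptyset$ for all $\beta<\alpha$; and $\Crit^\alpha(D)=\bigcap_{\beta<\alpha}\mathcal{M}_{\geq\beta}=\mathcal{M}_{\geq\alpha}=\emptyset$. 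Hence the SP-rank of $D$ is exactly $\alpha$. The hard part is the successor step of the induction: carefully distinguishing, inside the overring $T_\gamma$, between indeterminates of height $\ge\gamma$ (still non-units, so they force finitely generated subideals into squares of maximal ideals, making $M_{\delta,t}$ critical for $\delta>\gamma$) and indeterminates of height $<\gamma$ (now units, so they do not obstruct the existence of a finitely generated radical ideal inside $M_{\gamma,t}$), and making the bookkeeping of which principal ideals $X_{\gamma,s}T_\gamma$ are radical versus square-contained precise.
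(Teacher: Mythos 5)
Your proposal is correct and follows essentially the same route as the paper: show each $D_{M_{\beta,t}}$ is a DVR because the generators become associated locally, then prove $\Crit^\beta(D)=\{M_{\gamma,t}\mid\gamma\geq\beta\}$ by transfinite induction (successor step: indeterminates of height $<\gamma$ become units in $T_\gamma$, so $M_{\gamma,t}T_\gamma$ is principal and hence not critical, while for $\delta>\gamma$ the edge relations place every finitely generated subideal inside the square of a surviving maximal ideal; limit step trivial). The only slip is the parenthetical description of $T_\beta$ as the localization inverting certain height-$0$ indeterminates --- in fact every $X_{0,r}$ is already a unit in $T_1$, and what the argument actually needs (and what your ``equivalent'' formulation $\Max(T_\beta)=\{MT_\beta\mid M\in\mathcal{M}_{\geq\beta}\}$ correctly supplies, via Lemma \ref{lemma:divide larger}\ref{lemma:divide larger:gamma}) is that \emph{all} indeterminates of height $<\beta$ become units in $T_\beta$.
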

\begin{proof}
Let $\beta<\alpha$ and let $t\in U_\beta$: by Proposition \ref{prop:max ideals}, we need  to show that each $D_{M_{\beta,t}}$ is a DVR. Note that every indeterminate of height less than $\beta$ becomes a unit in $D_{M_{\beta,t}}$ and thus all the generators of $M_{\beta,t}$ are associated to each other; therefore, $M_{\beta,t}D_{M_{\beta,t}}$ is a principal ideal. Moreover, $D_{M_{\beta,t}}$ is one-dimensional since the only prime ideal contained in $M_{\beta,t}$ is $(0)$ (Proposition \ref{prop:max ideals}); hence $D_{M_{\beta,t}}$ is a DVR and $D$ is an almost Dedekind domain.

\bigskip

We now want to show that $\Crit^\beta(D)=\{M_{\gamma,t}\mid \gamma\geq\beta\}$ for every $\beta\geq 1$. We claim both that this is true and that $X_{\gamma,s}$ becomes a unit in $T_\beta$ if $\gamma<\beta$. We proceed by induction on $\beta$.

If $\beta=1$, then we need to show that $M_{\gamma,t}$ is critical if and only if $\gamma\geq 1$. If $\gamma=0$ then $M_{0,t}$ is principal and thus it is not critical. Otherwise, let $I\subseteq M_{\gamma,t}$ be a finitely generated ideal. Then, $M_{\gamma,t}$ is the union of the increasing family of principal ideals $\{X_{\gamma,\tau_\gamma(t)+ie_\gamma}D\}_{i=0}^\infty$, and thus $I\subseteq X_{\gamma,s}D$ for some $s\in U_\gamma$. There is an edge $(X_{\gamma,s},X_{\delta,s'})$: then, by Lemma \ref{lemma:path->divides}, $X_{\delta,s'}^2$ divides $X_{\gamma,s}$ and thus $I\subseteq M_{\delta,s'}^2$. Since $I$ was arbitrary, $M_{\gamma,t}$ is critical. Moreover, each $X_{0,s}$ becomes a unit in $T_1$.

Suppose now that the claim holds for all ordinal numbers up to $\beta$. If $\beta$ is a limit ordinal the claim is trivial.

Suppose that $\beta$ is a successor ordinal, say $\beta=\lambda+1$. By definition, the maximal ideals of $T_\lambda$ are the extensions of the elements of $\Crit^\lambda(D)$, which by the inductive step are the $M_{\gamma,s}$ for $\gamma\geq\lambda$; moreover,  each $X_{\gamma,s}$ of height less than $\lambda$ is a unit in $T_\lambda$. In particular, the generators $X_{\lambda,\tau_\lambda(t)+\delta e_\lambda}$ are all associated in $T_\lambda$. Therefore, $M_{\lambda,t}T_\lambda$ is a principal ideal and hence not critical, which implies that $X_{\lambda,t}$ becomes invertible in $T_{\lambda+1}=T_\beta$. Conversely, if $\gamma>\lambda$, take a finitely generated ideal $I\subseteq M_{\gamma,t}T_\lambda$. As in the case $\beta=0$, we have that $I\subseteq X_{\gamma,s}T_\lambda$ for some $s$, and we can find a $\delta\geq\beta$ such that $(X_{\gamma,s},X_{\delta,s'})$ is an edge, and thus $X_{\delta,s'}^2$ divides $X_{\gamma,s}$. Since $X_{\delta,s'}$ is not a unit in $T_\lambda$, $M_{\gamma,t}T_\lambda$ is therefore critical, and $\Crit(T_\lambda)=\{M_{\gamma,t}\mid \gamma>\lambda\}$. Hence,
\begin{equation*}
\Crit^\beta(D)=\Crit^{\lambda+1}(D)=\{M_{\gamma,t}\mid \gamma>\lambda\}=\{M_{\gamma,t}\mid \gamma\geq\beta\}.
\end{equation*}
By induction, this equality holds for every $\beta$.

In particular, $\Crit^\beta(D)=\emptyset$ if and only if $\beta\geq\alpha$. Hence, $D$ has SP-rank $\alpha$.
\end{proof}

\begin{corollary}
    Let $\alpha$ be an arbitrary ordinal number. Then there is an almost Dedekind domain $D$ of SP-rank $\alpha$.
\end{corollary}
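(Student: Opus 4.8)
The plan is to obtain the corollary as an immediate consequence of Theorem \ref{theorem:sp-rank alpha}. Given an arbitrary ordinal $\alpha$, I would simply run the construction carried out in the preceding sections (culminating in Section \ref{sect:example}) with this $\alpha$ as the fixed parameter: set $\xi:=\max\{\omega,\alpha\}$, build the directed graph $\mathcal{T}$ on the vertex set $\mathbf{X}=\bigcup_{\beta<\alpha}\mathbf{X}_\beta$, choose any field $F$ for which $\mathbf{X}_0$ is a set of independent indeterminates, form the ring $F[\mathbf{X}]$ in which the vertices of positive height are the prescribed infinite products, and finally localize at the multiplicative set $S$ of elements not divisible by a proper monomial to obtain $D:=S^{-1}F[\mathbf{X}]$. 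By Proposition \ref{prop:max ideals} the prime spectrum of $D$ is $\mathcal{M}\cup\{0\}$, and by Theorem \ref{theorem:sp-rank alpha} this $D$ is an almost Dedekind domain whose SP-rank is exactly $\alpha$. Since $\alpha$ was arbitrary, this is precisely the assertion of the corollary.

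The only point requiring a moment's attention is that the construction remains meaningful in the degenerate small cases, where several of the index sets collapse. For $\alpha=1$ one has $\suc(\alpha)=\li(\alpha)=\emptyset$ and $\alpha\setminus\{0\}=\emptyset$, so $U_0$ is the singleton consisting of the empty sequence, $\mathbf{X}=\mathbf{X}_0$ is a single indeterminate, and $D$ is the localization of a one-variable polynomial ring at the ideal generated by that variable, that is, a discrete valuation ring; this is an almost Dedekind domain of SP-rank $1$, in agreement with the theorem. For $\alpha=0$ the vertex set is empty and $D=F$ is a field with $T_0=D=K$, so one reads off SP-rank $0$ (or, if one prefers to exclude fields from the discussion, this case is simply vacuous). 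For every $\alpha\geq 1$ no such subtlety occurs and the corollary follows verbatim from Theorem \ref{theorem:sp-rank alpha}.

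Accordingly, I would present the proof as essentially a single sentence invoking Theorem \ref{theorem:sp-rank alpha}, perhaps with a parenthetical remark handling $\alpha\leq 1$. I do not expect any obstacle here: the whole substance has already been established — the finiteness and uniqueness of paths in $\mathcal{T}$, the divisibility and gcd theory of $F[\mathbf{X}]$ (Proposition \ref{prop:gcd monomial}), the explicit determination of $\Max(D)$ (Proposition \ref{prop:max ideals}), and the computation $\Crit^\beta(D)=\{M_{\gamma,t}\mid\gamma\geq\beta\}$ that pins the stabilization point of the chain $\{\Crit^\beta(D)\}_\beta$ exactly at $\alpha$ — so the corollary merely repackages Theorem \ref{theorem:sp-rank alpha} in the existential form promised in the introduction.
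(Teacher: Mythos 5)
Your proposal is correct and matches the paper exactly: the corollary is an immediate consequence of Theorem \ref{theorem:sp-rank alpha}, which is why the paper supplies no further argument. Your extra remarks on the degenerate cases $\alpha\leq 1$ are harmless and consistent with the construction.
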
 \qed

\begin{example}
We work out the case $\alpha=3$ in an explicit way.

In this case, we only have indeterminates of three heights, $0$, $1$ and $2$, and the index sets can be described as
\begin{equation*}
\begin{aligned}
U_0=U_1& = \{(t_1,t_2)\mid t_1,t_2\in\insN_0\};\\
U_2 &= \{(0,t_2)\mid t_2\in\insN_0\}.
\end{aligned}
\end{equation*}
To simplify the notation, we set
\begin{equation*}
\begin{aligned}
Y_{a,b} & := X_{0,(a,b)},\\
Z_{a,b} & := X_{1,(a,b)},\\
W_n & := X_{2,(0,n)}.
\end{aligned}
\end{equation*}
Then, the infinite products take the form
\begin{equation*}
Z_{a,b}=\prod_{i=1}^\infty Y_{a+i,b}^2\quad\text{~and~}\quad W_n=\prod_{i=1}^\infty Z_{0,n+i}^2.
\end{equation*}
The maximal ideals are those of the form $M_{0,t}$, $M_{1,t}$ and $M_{2,t}$. The first ones are principal:
\begin{equation*}
M_{0,(a,b)}=Y_{a,b}D.
\end{equation*}
If $t=(a,b)\in U_1$, then $\tau_1(t)=(0,b)$. Thus
\begin{equation*}
M_{1,(a,b)}=\sum_{n=0}^\infty Z_{(n,b)}D=\bigcup_{n=0}^\infty Z_{(n,b)}D.
\end{equation*}
In particular, $M_{1,(a,b)}$ does not depend on $a$: these maximal ideals are indexed by $b$. We write $M_{1,b}:=M_{1,(0,b)}$. Lastly, $\tau_2((0,n))=(0,0)$, and so
\begin{equation*}
M_{2,n}=\sum_{i=0}^\infty W_{n+i}D=\bigcup_{i=0}^\infty W_{n+i}D.
\end{equation*}
Therefore, $M_{2,n}=M_{2,0}$, and thus there is a unique maximal ideal in this third class, which we denote by $M_2$.

The maximal ideals $M_{0,(a,b)}$ are principal and thus not critical. On the other hand, both the ideals $M_{1,b}$ and $M_2$ are critical: indeed, the chain $\{Z_{n,b}D\}$ is increasing and $Z_{n,b}$ belongs to $Y_{n+1,b}^2D=M_{0,(n+1,b)}^2$ for each $n$, and likewise $\{W_{n}D\}$ is increasing and $W_{n}\in Z_{0,n+1}^2D\subseteq M_{1,n+1}^2$. 

The ring $T_1$ is obtained from $D$ by inverting all the $Y_{a,b}$; since $Z_{a,b}=Y_{a+1,b}^2\cdot Z_{a+1,b}$, it follows that $Z_{a,b}$ and $Z_{a',b}$ are associated in $T_1$ for all $a,a'$, and so $M_{1,b}T_1$ is generated by any $Z_{a,b}$, and thus is principal. A similar reasoning to above shows that $M_2T_1$ remains critical; repeating the process, we obtain that $T_2$ has a unique maximal ideal, $M_2T_2$, which thus must be principal. 

It follows that the critical sets are:
\begin{equation*}
\begin{aligned}
\Crit(D)=\Crit^1(D) = & \{M_{1,a}\mid a\in\insN_0\}\cup\{M_2\},\\
\Crit^2(D) = & \{M_2\},\\
\Crit^3(D) = & \emptyset.
\end{aligned}
\end{equation*}
Hence $D$ has SP-rank $3$.
\end{example} 

\section{Generalizations}
Let $I$ be an ideal of an almost Dedekind domain $D$. The \emph{ideal function} associated to $I$ is
\begin{equation*}
\begin{aligned}
\nu_I\colon\Max(D) & \longrightarrow\insN\\
M & \longmapsto v_M(I)
\end{aligned}
\end{equation*}
where $v_M$ is the valuation relative to the localization $D_M$. Then, a maximal ideal $M$ is critical if there is no finitely generated ideal $I\subseteq M$ such that $\sup\nu_I=1$.

Extending this definition, following \cite{boundness} we say that:
\begin{itemize}
\item $M$ is \emph{$n$-critical} (for $n\in\insN$) if there is no finitely generated ideal $I\subseteq M$ such that $\sup\nu_I\leq n$;
\item $M$ is \emph{$\omega$-critical} if  there is no finitely generated ideal $I\subseteq M$ such that $\nu_I$ is bounded.
\end{itemize}

If we denote by $\phantom{}^n\Crit(D)$ the set of $n$-critical maximal ideals of $D$ (with $n\in\insN\cup\{\omega\}$, we have $\Crit(D)=\phantom{}^1\Crit(D)$ and there is a chain
\begin{equation*}
\Crit(D)\supseteq\phantom{}^2\Crit(D)\supseteq\cdots\supseteq\phantom{}^n\Crit(D)\supseteq\cdots\supseteq\phantom{}^\omega\Crit(D)
\end{equation*}
Moreover, we can construct a sequence analogous to $\Crit^\beta(D)$ by setting:
\begin{itemize}
\item $\phantom{}^nT_0:=D$, $\phantom{}^n\Crit^0(D):=\Max(D)$;
\item if $\alpha=\beta+1$ is a successor ordinal, then
\begin{equation*}
\phantom{}^n\Crit^\alpha(D):=\{P\in\Max(D)\mid P(\phantom{}^nT_\beta)\in\phantom{}^n\Crit(\phantom{}^nT_\beta)\};
\end{equation*}
\item if $\alpha$ is a limit ordinal, then
\begin{equation*}
\phantom{}^n\Crit^\alpha(D):=\bigcap_{\beta<\alpha}\phantom{}^n\Crit^\beta(D);
\end{equation*}
\item $\displaystyle{\phantom{}^nT_\alpha:=\bigcap\{D_M\mid M\in\phantom{}^n\Crit^\alpha(D)\}}$.
\end{itemize}

Let now $D$ be the domain we constructed in Section \ref{sect:example}. An inspection of the proof of Theorem \ref{theorem:sp-rank alpha} shows that no maximal ideal of $D$ is $2$-critical, since for every maximal ideal $M_{\beta,t}$ there is an $X=X_{\gamma,t'}$ with $\gamma<\beta$ such that $\nu_{XD}(M_{\gamma,t'})=2$. Hence, $\phantom{}^n\Crit(D)=\emptyset$ for every $n\geq 2$.

On the other hand, suppose that we had defined
\begin{equation*}
X:=\prod_{(X,Y)\in\mathcal{T}}Y^3,
\end{equation*}
that is, that we used $Y^3$ instead of $Y^2$. Then, essentially nothing in our construction would change: we can still find that any two monomials have a greatest common divisor, we can define $D$, and $D$ will be an almost Dedekind domain with SP-rank $\alpha$. However, this time every critical ideal would also be $2$-critical, while no ideal would be $3$-critical; more precisely, we would have
\begin{equation*}
\phantom{}^2\Crit^\beta(D)=\Crit^\beta(D)
\end{equation*}
for every ordinal number $\beta<\alpha$, while $\phantom{}^k\Crit^\beta(D)=\emptyset$ if $k>2$. Similarly, using $Y^n$ instead of $Y^2$ would yield an almost Dedekind domain such that
\begin{equation*}
\Crit^\beta(D)=\phantom{}^2\Crit^\beta(D)=\cdots=\phantom{}^{n-1}\Crit^\beta(D)
\end{equation*}
while $\phantom{}^n\Crit^\beta(D)=\emptyset$ for all $\beta<\alpha$.

We can say even more. Suppose that $\alpha=\omega$, and define
\begin{equation*}
X:=\prod_{(X,Y)\in\mathcal{T}}Y^{n+1},
\end{equation*}
where $n$ is equal to the height of $X$. Even in this case, the construction of the ring $D$ and the description of its maximal ideals works as in the base case, and $\Crit(D)=\{M_{\beta,t}\mid \beta\geq 1\}$.

Consider now $M=M_{\beta,t}$. If $\beta=1$, then, $M$ is not $2$-critical, since $\nu_{X_{0,t+e_1}D}(M)=2$. On the other hand, if $\beta\geq 2$ and $Y=X_{\beta-1,t'}$ is a divisor of $X_{\beta,t}$, then $\nu_{YD}(M)\ge 3$; since this happens for all divisors, we have that $M_{\beta,t}$ is $2$-critical. Thus $\phantom{}^2\Crit(D)=\{M_{\beta,t}\mid \beta\geq 2\}$.

An analogous reasoning shows that 
\begin{equation*}
\phantom{}^n\Crit(D)=\{M_{\beta,t}\mid \beta\geq n\}
\end{equation*}
and that
\begin{equation*}
\phantom{}^\omega\Crit(D)=\{M_{\beta,t}\mid \beta\geq \omega\}=\emptyset;
\end{equation*}
in particular, the almost Dedekind domain $D$ satisfies
\begin{equation*}
\Crit(D)\supsetneq\phantom{}^2\Crit(D)\supsetneq\cdots\supsetneq\phantom{}^n\Crit(D)\supsetneq\cdots\supsetneq\phantom{}^\omega\Crit(D).
\end{equation*}

\bibliographystyle{plain}
\bibliography{biblio}
\end{document}